\newtheorem{theorem}{Theorem}
\begin{document}
{\centering\LARGE Generation of Multi-Scroll Attractors Without Equilibria Via Piecewise Linear Systems\par}
\bigskip
{\centering\large R.J.~Escalante-Gonz\'alez$^1$,  E.~Campos-Cant\'on$^2$ \par \footnotetext[2]{Corresponding Author}}
{\centering\itshape Divisi\'on de Matem\'aticas Aplicadas, Instituto Potosino de Investigaci\'on Cient\'ifica y Tecnol\'ogica A. C., Camino a la Presa San Jos\'e 2055, Col. Lomas 4 Secci\'on, C.P. 78216, San Luis Potos\'i, S.L.P., M\'exico. $^1$rodolfo.escalante@ipicyt.edu.mx, $^2$eric.campos@ipicyt.edu.mx\par}
\bigskip
{\centering \large Matthew Nicol\par}
{\centering\itshape Mathematics Department, University of Houston, \par Houston, Texas 77204-3008, USA.\par nicol@math.uh.edu\par}
\bigskip
\begin{abstract}
	In this paper we present a new class of dynamical system without equilibria which possesses a multi scroll attractor. It is a piecewise-linear (PWL) system which is simple, stable, displays chaotic behavior and serves as a model for analogous non-linear systems. We test for chaos using the 0-1 Test for Chaos of Ref.~\cite{Gottwald}.
\end{abstract}

\section{Introduction}
In recent years  the study of dynamical systems with complicated dynamics but without equilibria has attracted attention.
Since the first dynamical system of this kind with a chaotic attractor was  introduced in Ref.~\cite{Sprott94} (Sprott case A), several works have investigated this topic. Several three-dimensional (3D) autonomous dynamical systems which exhibit chaotic attractors and whose associated vector fields present quadratic nonlinearities have been reported, for example Ref.~\cite{Wang13} and the 17 NE systems given in Ref.~\cite{Sajad13}. Also, four-dimensional (4D) autonomous systems have been exhibited, such  as the four-wing non-equilibrium chaotic system Ref.~\cite{LIN16} and the systems with hyperchaotic attractors (equivalently two positive Lyapunov exponents) in Ref.~\cite{Wang12-2,Pham16}, whose vector fields have quadratic and  cubic nonlinearities.

These studies have not been restricted to integer orders, in~\cite{Cafagna14} a new fractional-order chaotic system without equilibrium points was presented, this system represents the fractional-order counterpart of the integer-order system NE$_6$ studied in Ref.~\cite{Sajad13}.

The first piecewise linear system without equilibrium points with an hyperchaotic attractor was reported in~\cite{Li12}, this system was based on the diffusion-less Lorenz system by approximating  the quadratic nonlinearities with the sign and absolute value functions.
Recently, in Ref.~\cite{Rodolfo}, a class of PWL dynamical systems without equilibria was reported, the attractors exhibited by these systems can be easily shifted along the $x$ axis by  displacing the switching plane. In fact, the attractors generated by these systems without equilibria fulfill the definition of hidden attractor given in Ref.~\cite{Dudkowski16}.

In this paper we present a new class of PWL dynamical system without equilibria that generates a multiscroll attractor. It  is based on linear affine transformations of the form $A\boldsymbol{x}+B$ where $A$ is a  $3\times 3$ matrix with two complex conjugate eigenvalues with positive real part and the third eigenvalue either zero, slightly negative or positive.
The construction for $A$ having a zero eigenvalue is presented in section~\ref{sec:multiscrollconst} and the case when no eigenvalue of $A$ is zero due to a perturbation is presented in section~\ref{sec:multiscrollconst2}, numerical simulations are provided for both cases.

\section{\label{sec:multiscrollconst}PWL dynamical system with multiscroll attractor:  singular matrix case.}

We will first present the construction where $A$  has two complex conjugate eigenvalues with positive real part and the  third eigenvalue is zero. In the next section we will study the effect of allowing the 
zero eigenvalue to become slightly negative $-\epsilon$ or slightly positive $\epsilon$ for  a parameter $\epsilon >0$. 
In order to introduce the new class of PWL dynamical system we first review some useful theorems used in the construction. The next theorem gives necessary and sufficient conditions for the absence of equilibria in a system based on a linear affine transformation. These systems will be used as sub-systems for the PWL construction.

\begin{theorem}\cite{Rodolfo}
	Given a dynamical system based on an affine transformation of the form $\boldsymbol{\dot{x}}=A\boldsymbol{x}+B$ where
	$\boldsymbol{x}\in \mathbb{R}^n$ is the state vector, $B\in \mathbb{R}^n$ is a nonzero constant vector and $A \in \mathbb{R}^{n\times n}$ is a linear operator, the system possesses no equilibrium point if and only if:
	\begin{itemize}
		\item $A$ is not invertible and 
		\item $B$ is linearly independent of the set of non-zero vectors comprised by columns of the operator $A$.
	\end{itemize}
	\label{th:noequi}
\end{theorem}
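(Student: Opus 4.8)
The plan is to recast the existence of an equilibrium as the question of whether $B$ lies in the range (column space) of $A$, and then translate that range condition into the two stated bullet points. By definition an equilibrium is a point $\boldsymbol{x}^*$ with $A\boldsymbol{x}^* + B = \boldsymbol{0}$, i.e. $A\boldsymbol{x}^* = -B$. Such an $\boldsymbol{x}^*$ exists if and only if $-B$ lies in the range of $A$; since the range is a linear subspace, hence closed under scalar multiplication, this is equivalent to $B \in \mathrm{range}(A)$. Therefore the system has \emph{no} equilibrium precisely when $B \notin \mathrm{range}(A)$. I would state this single reformulation first, since it carries the entire content of the theorem.

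Next I would identify $\mathrm{range}(A)$ with the span of the columns of $A$, observing that discarding the zero columns does not alter this span; thus $\mathrm{range}(A)$ equals the span of the set of non-zero columns of $A$. Under this identification the condition $B \notin \mathrm{range}(A)$ is exactly the statement that $B$ cannot be written as a linear combination of the non-zero columns, i.e. that $B$ is linearly independent of them, which is the second bullet.

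For the two directions I would argue as follows. ($\Leftarrow$) Assuming both bullets, the second gives $B \notin \mathrm{span}(\text{non-zero columns}) = \mathrm{range}(A)$, so $A\boldsymbol{x} = -B$ is unsolvable and no equilibrium exists. ($\Rightarrow$) Conversely, if no equilibrium exists then $B \notin \mathrm{range}(A)$, which is the second bullet; moreover, were $A$ invertible we would have $\mathrm{range}(A) = \mathbb{R}^n$, forcing $B \in \mathrm{range}(A)$, a contradiction, so $A$ must be non-invertible, giving the first bullet.

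I do not expect a serious obstacle: the result is essentially the elementary solvability statement that $A\boldsymbol{x} = b$ has a solution iff $b$ lies in the column space of $A$. The only points requiring care are the linear-algebra bookkeeping — that the range equals the span of the \emph{non-zero} columns — and the observation that the non-invertibility bullet is in fact implied by the independence bullet (an invertible $A$ has full column space, so $B$ could never be independent of its columns). The two conditions together are thus equivalent to the single condition $B \notin \mathrm{range}(A)$, and I would note this so that the conjunction in the statement is clearly consistent with the claimed equivalence.
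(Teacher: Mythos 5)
Your proof is correct and complete. One point of context: this paper does not actually prove Theorem~\ref{th:noequi} --- it is stated with a citation to Ref.~\cite{Rodolfo} and used as a black box --- so there is no in-paper proof to compare against; your argument, reducing everything to the single condition $B \notin \mathrm{range}(A)$ via the elementary solvability criterion for $A\boldsymbol{x} = -B$, is the standard and presumably intended one. Your observation that the two bullets are logically redundant (the independence condition already forces $A$ to be singular, since an invertible $A$ has $\mathrm{range}(A) = \mathbb{R}^n \ni B$) is a genuine refinement of the statement as written. One caveat worth making explicit: your reading of the second bullet as ``$B$ does not lie in the span of the non-zero columns'' is the correct and intended interpretation, but the phrasing admits a stricter reading --- that $B$ together with the non-zero columns forms a linearly independent set --- under which the theorem would be false whenever the non-zero columns are dependent among themselves (e.g.\ two equal non-zero columns), since then no $B$ could satisfy the condition even though equilibria may be absent. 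Flagging and resolving that ambiguity, as you implicitly did, is part of a careful proof.
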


The next theorem tells us, for a specific type of linear operator, which vectors are linearly independent of the non-zero column vectors of the associated matrix of the operator which will help us to fulfill the conditions in Theorem~\ref{th:noequi}.

\begin{theorem}
	Suppose $A\in\mathbb{R}^{3\times 3}$ is a linear operator whose characteristic polynomial has zero as a  single root (so that zero has  algebraic multiplicity one). Then any eigenvector associated to zero is linearly independent of the column vectors of the matrix $A$.
	\label{th:indep}
\end{theorem}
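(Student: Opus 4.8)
The plan is to recast the conclusion geometrically. Since the columns of $A$ span its image $\operatorname{Im}(A)$, the assertion that an eigenvector $\boldsymbol{v}$ associated to the eigenvalue $0$ is linearly independent of the column vectors of $A$ is exactly the statement that $\boldsymbol{v}\notin\operatorname{Im}(A)$. Because $0$ is a simple root of the characteristic polynomial, its algebraic multiplicity is one; as the geometric multiplicity is trapped between $1$ and the algebraic multiplicity, it too equals one. Hence $\ker A$ is one-dimensional and spanned by $\boldsymbol{v}$, while $\operatorname{rank}(A)=2$. It therefore suffices to prove that $\ker A\cap\operatorname{Im}(A)=\{\boldsymbol{0}\}$, for then the nonzero vector $\boldsymbol{v}\in\ker A$ cannot lie in $\operatorname{Im}(A)$.

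First I would invoke the Cayley--Hamilton theorem. Writing the monic characteristic polynomial as $p(\lambda)=\lambda^{3}-s\lambda^{2}+q\lambda-r$, the hypothesis that $0$ is a root gives $r=\det A=0$, so that $p(\lambda)=\lambda(\lambda^{2}-s\lambda+q)$. The crucial observation is that $q\neq 0$: indeed $q=p'(0)$, which is nonzero precisely because $0$ is a \emph{simple} root. Equivalently, in terms of the remaining eigenvalues $\mu_{2},\mu_{3}$ (possibly a complex conjugate pair) one has $q=\mu_{2}\mu_{3}\neq 0$, since a simple zero root forces the other two eigenvalues to be nonzero. Cayley--Hamilton then yields the matrix identity $A^{3}-sA^{2}+qA=0$.

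The argument is now immediate. Suppose $\boldsymbol{u}\in\ker A\cap\operatorname{Im}(A)$, say $\boldsymbol{u}=A\boldsymbol{w}$ with $A\boldsymbol{u}=\boldsymbol{0}$. Applying the identity above to $\boldsymbol{w}$ and using $A^{2}\boldsymbol{w}=A\boldsymbol{u}=\boldsymbol{0}$ (hence also $A^{3}\boldsymbol{w}=\boldsymbol{0}$) collapses the expression to $qA\boldsymbol{w}=q\boldsymbol{u}=\boldsymbol{0}$, and since $q\neq 0$ we conclude $\boldsymbol{u}=\boldsymbol{0}$. This establishes $\ker A\cap\operatorname{Im}(A)=\{\boldsymbol{0}\}$ and hence the theorem. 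I expect the only delicate point to be justifying $q\neq 0$, that is, translating the hypothesis ``zero has algebraic multiplicity one'' into the nonvanishing of the linear coefficient $p'(0)$; everything else is a one-line substitution. As a cross-check, the same conclusion can be reached without Cayley--Hamilton by noting that a simple zero root precludes a Jordan block of size greater than one at $0$, whence $\ker A^{2}=\ker A$; then $\boldsymbol{u}=A\boldsymbol{w}\in\ker A$ forces $\boldsymbol{w}\in\ker A^{2}=\ker A$, so $\boldsymbol{u}=A\boldsymbol{w}=\boldsymbol{0}$, which is the assertion that $0$ is semisimple and $\mathbb{R}^{3}=\ker A\oplus\operatorname{Im}(A)$.
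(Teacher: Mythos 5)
Your proof is correct, and it takes a genuinely different route from the paper's. The paper argues geometrically via the (real) Jordan canonical form: it decomposes $\mathbb{R}^3=U\oplus V$, where $U$ is the two-dimensional invariant subspace for the nonzero eigenvalues and $V=\ker A$, observes that $A$ restricted to $U$ is invertible with $AU\subset U$, so that $A\boldsymbol{x}=A\boldsymbol{x}_u\in U$ for every $\boldsymbol{x}$, and concludes that $\operatorname{Im}(A)\subseteq U$ cannot contain a nonzero vector of $V$. You instead prove the equivalent statement $\ker A\cap\operatorname{Im}(A)=\{\boldsymbol{0}\}$ purely algebraically via Cayley--Hamilton: since $\det A=0$ and the linear coefficient $q=p'(0)=\mu_2\mu_3$ is nonzero precisely because the zero root is simple (your justification of this, the one delicate point, is right), the identity $A^3-sA^2+qA=0$ applied to a preimage $\boldsymbol{w}$ of $\boldsymbol{u}\in\ker A\cap\operatorname{Im}(A)$ kills the top two terms and forces $q\boldsymbol{u}=\boldsymbol{0}$. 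Both arguments rest on the same structural fact---a simple zero eigenvalue is semisimple, so the image is a complement of the kernel---and indeed your closing ``cross-check'' ($\ker A^2=\ker A$, hence $\mathbb{R}^3=\ker A\oplus\operatorname{Im}(A)$) is essentially the paper's proof restated. What your main argument buys: it avoids invoking the Jordan form altogether (over $\mathbb{R}$, with a complex conjugate pair, the paper's argument tacitly needs the real Jordan form or complexification), and it generalizes verbatim to $n\times n$, since a simple zero root always gives $p(\lambda)=\lambda g(\lambda)$ with $g(0)\neq 0$ and the same telescoping applies. What the paper's approach buys is the explicit invariant complement $U=\operatorname{Im}(A)$, which is the geometric picture (unstable plane plus neutral axis) exploited throughout the rest of the construction.
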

\begin{proof}
	By considering the Jordan Canonical Form there is an invariant subspace $U$ of  dimension two  corresponding to the other non-zero eigenvalues (either two real eigenvalues or a 
	complex conjugate pair of eigenvalues $a+ib$, $a-ib$ with $b\not =0$) and a one dimensional invariant subspace $V$
	corresponding to the eigenvalue $\lambda=0$.  Thus  $U\oplus V=\mathbb{R}^3$ and $A$ restricted to $U$ is invertible, hence $AU\subset U$. If $x\in \mathbb{R}^3$ then we may write (uniquely) $x=x_u\oplus x_v$
	where $x_u\in U$ and $x_v \in V$. Then $A x= Ax_u + A x_v=Ax_u$ as $Ax_v=0$. Since $Ax_u\subset U$ we cannot solve $Ax=w$ where $w\in V$ is  a nonzero vector.\end{proof}

We will illustrate our mechanism for producing multi-scroll chaotic attractors   via a detailed  example. 

\subsection{Example.}

Consider a dynamical system whose associated vector field is the linear system:
\begin{equation}\label{ec:linear}
\boldsymbol{\dot{x}}=A\boldsymbol{ x},
\end{equation}
where $\boldsymbol{x} \in \mathbb{R}^3$ is the state vector, and $A\in\mathbb{R}^{3\times 3}$ is a linear operator. The matrix $A$ has eigenvalues $\lambda_i$, $i=1,2,3$ where $\lambda_1$, $\lambda_2$ are complex conjugate eigenvalues with positive real part while $\lambda_3=0$.\\

To illustrate this class of dynamical systems with multiscroll attractors consider the model system with a vector field of the form (\ref{ec:linear}) in $\mathbb{R}^3$ whose matrix  $A$ is given as follows:

\begin{equation}\label{eq:Amatrix1}
A=\begin{bmatrix}
m  & -n & 0\\
n 	 & m & 0\\
0 & 0  & 0
\end{bmatrix},\hspace{.3cm}
A=[a_1,a_2,a_3],
\end{equation}
where $a_1$, $a_2$ and $a_3$ are the column vectors of the matrix $A$ and we suppose $m>0$ and $n\neq0$. The eigenvectors $V$ associated to the eigenvalue $\lambda=0$ are given as follows:

\begin{equation}
V=(0,0,v)^T,
\end{equation}
with $v\neq0$.\\
Note that $A$ is of rank two and its column space equals the two-dimensional unstable subspace $<a_1,a_2>$.  We will now consider
the vector field formed by  adding a vector $k_1 a_1+k_2 a_2$ in the span of the  column vectors of the linear operator $A$. 
\begin{equation}
\boldsymbol{\dot{x}}=A\boldsymbol{x}+k_1a_1+k_2a_2.
\end{equation}
Using the matrix $A$ given by \eqref{eq:Amatrix1}, we have the following linear system:
\begin{equation}\label{eq:Aks}
\boldsymbol{\dot{x}}=
\begin{bmatrix}
m& -n & 0\\
n& m & 0\\
0& 0 & 0
\end{bmatrix}
\begin{bmatrix}
x_1+k_1\\x_2+k_2\\x_3
\end{bmatrix}.
\end{equation}
Considering the following change of variables $y_1=x_1+k_1$, $y_2=x_2+k_2$ and $y_3=x_3$, the system \eqref{eq:Aks} is given as follows:
\begin{equation}\label{eq:Ay}
\boldsymbol{\dot{y}}=
\begin{bmatrix}
m& -n & 0\\
n& m & 0\\
0& 0 & 0
\end{bmatrix}
\begin{bmatrix}
y_1\\y_2\\y_3
\end{bmatrix}=A\boldsymbol{y},
\end{equation}
so that the rotation in  the unstable subspace is around the axis given by the line 
\[
\begin{bmatrix}
- k_1\\- k_2\\0
\end{bmatrix}
+
t\begin{bmatrix}
0\\0\\1
\end{bmatrix}.
\]
If  a non-zero vector $V$ in the neutral direction is added then the point  \[
\begin{bmatrix}
- k_1\\- k_2\\0
\end{bmatrix},
\]
is no longer an  equilibrium and the vector field at this point is equal to 
\[
V= \begin{bmatrix}
0\\0\\v
\end{bmatrix},
\]
so that
\begin{equation}\label{eq:AfinneLS}
\boldsymbol{\dot{y}}=A\boldsymbol{y}+V.
\end{equation}

The solution of the initial value problem $X_0$ for \eqref{eq:AfinneLS} is given by
\begin{equation}\label{eq:solAy}
\boldsymbol{y(t)}=
\begin{bmatrix}
e^{mt}cos(nt)& -e^{mt}sin(nt) & 0\\
e^{mt}sin(nt)& e^{mt}cos(nt) & 0\\
0& 0 & 0
\end{bmatrix}
\begin{bmatrix}
y_1(0)\\y_2(0)\\y_3(0)
\end{bmatrix}+
\begin{bmatrix}
0\\0\\v*t
\end{bmatrix},\end{equation}

\begin{equation}\label{ec:solAx}
\boldsymbol{x(t)}=
e^{mt}
\begin{bmatrix}
cos(nt)& -sin(nt) & 0\\
sin(nt)& cos(nt) & 0\\
0& 0 & 0
\end{bmatrix}
\begin{bmatrix}
x_1(0)+k_1\\x_2(0)+k_2\\x_3(0)
\end{bmatrix}+
\begin{bmatrix}
-k_1\\-k_2\\v*t
\end{bmatrix}.\end{equation}

\begin{figure}[h!]
	\centering
	\includegraphics[width=0.7\columnwidth]{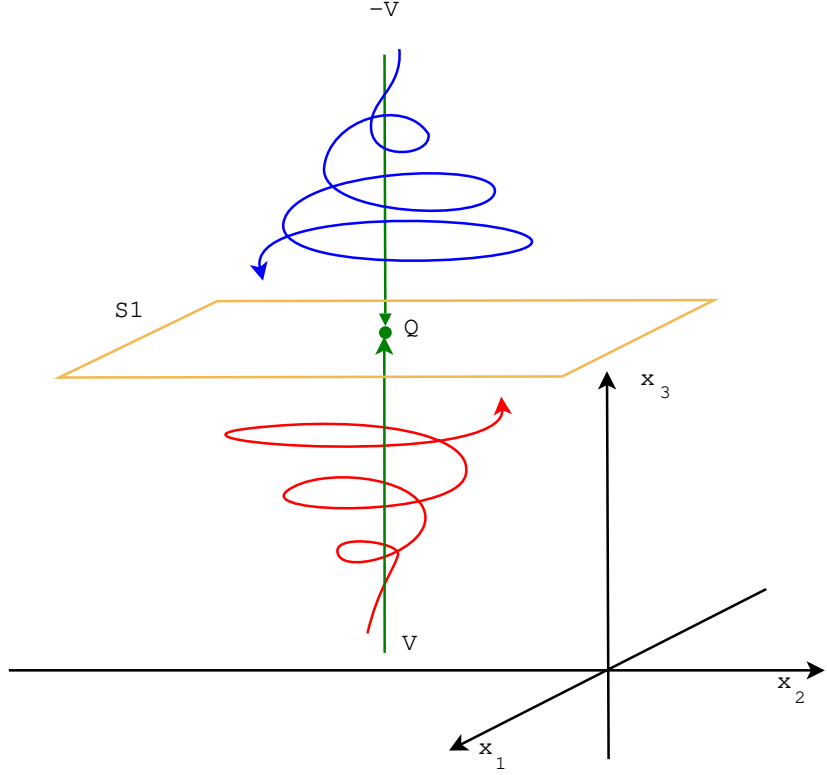}
	\caption{\label{fig:Saddle}A ``saddle-focus like'' locate at the point $Q=(-k_1,-k_2,\tau)$.}
\end{figure}

We wish  to generate a ``saddle-focus like" behavior which bounds motion, so a piecewise linear system is constructed  as follows. A switching surface $S$ will be a hyperplane oriented by its positive normal.  $S$ divides $\mathbb{R}^3$ into two connected components, if a point $\boldsymbol{x}$ lies in the component pointed to by the positive normal of $S$ we will write $\boldsymbol{x}>S$.

To illustrate we  take our switching surface to be $x_3=0$ and define a flow on 
$S$ by 
\begin{equation}\label{eq:HyperPWLS}
\boldsymbol{\dot{x}}=
\begin{cases}
A\boldsymbol{x}+ W, & \text{if } x_1 <0;\\
A\boldsymbol{x}-W, & \text{if } x_1 \geq 0;
\end{cases}
\end{equation}
The vector $W$ is chosen in the plane $S$ (hence in the unstable subspace of $A$), with sign such  that the $x_1$ component of $p_1:=-A_{|S}^{-1}W$ is positive. This ensures that the flow
on $S\cap \{x_1 <0\}$ has an unstable focus at $p_1$, while the flow on $S\cap \{x_1 \ge 0\}$ has an unstable focus at $-p_1$. Thus there is no equilibria for the PWL flow on $S$. Next we define the flow on $\boldsymbol{x}>S$ and $\boldsymbol{x}<S$. 
\begin{equation}\label{eq:2PWLS}
\boldsymbol{\dot{x}}=
\begin{cases}
A\boldsymbol{x}+V+W, & \text{if } \boldsymbol{x} <S, x_1< 0;\\
A\boldsymbol{x}+V-W, & \text{if } \boldsymbol{x} <S, x_1 \ge 0;\\
A\boldsymbol{x}-V+W, & \text{if } \boldsymbol{x} > S, x_1< 0 ;\\
A\boldsymbol{x}-V-W, & \text{if } \boldsymbol{x} > S; x_1 \ge 0;
\end{cases}
\end{equation}
where  we choose the direction of $V$ so that the dynamical system defined by \eqref{eq:2PWLS} $\phi:\mathbb{R}\times \mathbb{R}^3 \to \mathbb{R}^3$
gives the motion of the points towards  the switching plane
$S$.

See ~Figure~\ref{fig:Saddle} where $W=k_1a_1+k_2a_2$ and the plane $S$ is given by $x_3=\tau$.
With a pair of systems $F_1(x)$, $F_3 (x)$ defined respectively  by parallel switching surfaces $S_1$, $S_3$ similar to those described by \eqref{eq:HyperPWLS} and \eqref{eq:2PWLS} along with another switching surface $S_2$ transverse to the other two   it is possible to generate multiscroll attractors.
For example a double scroll-attractor can be generated with the following Piecewise Linear System provided that $F_3(\boldsymbol{x})$ is correctly displaced:
\begin{equation}\label{eq:4PWLS}
\boldsymbol{\dot{x}}=
\begin{cases}
F_1(\boldsymbol{x}), & \text{if } \boldsymbol{x}<S_2;\\
F_3(\boldsymbol{x}), & \text{if } \boldsymbol{x} \geq S_2.
\end{cases}
\end{equation}
Note that switching surfaces $S_1$ and $S_3$ are used to generate two   ``saddle-focus like" behavior, and switching surface $S_2$
is used to commute between these two ``saddle-focus like" behavior, see Fig.~\ref{fig:2spirals}.
The surface $S_2$ is responsible for the stretching and folding behavior in the system in order to generate chaos. This switching surface is constrained to be transverse to the unstable manifold and neutral manifold. Note that the planes $S_1$, $S_2$ and $S_3$ are not invariant under the flow defined by \eqref{eq:4PWLS}. 

It is possible to define a system with 3-scroll attractors, for example as follows (see Fig.~\ref{fig:3spirals}): 
\begin{equation}\label{eq:6PWLS}
\boldsymbol{\dot{x}}=
\begin{cases}
F_1(\boldsymbol{x}), & \text{if } \boldsymbol{x}<S_2;\\
F_2(\boldsymbol{x}), & \text{if } S_2 \le \boldsymbol{x}<S_4;\\
F_3(\boldsymbol{x}), & \text{if } \boldsymbol{x} \geq S_4.
\end{cases}
\end{equation}
Such a system demonstrates the chaotic nature of  the associated flow via symbolic dynamics. If we take small $\epsilon$-neighborhoods of the surfaces $S_1$, $S_3$ and $S_5$ and 
record a $1$, $3$ or $5$ each time a trajectory under the flow crosses into that neighborhood from outside that neighborhood we generate a shift on the space $\{1,3,5\}^N$. Both the symbol sequences
$135$, $131$ and correspondingly $535$ and $531$ may occur, and the symbol sequences generated are complex, non-periodic and we call them ``chaotic''. How disordered the system is in terms of entropy for example, is a subject for another paper.
\begin{figure}[h!]
	\centering
	\includegraphics[width=0.7\columnwidth]{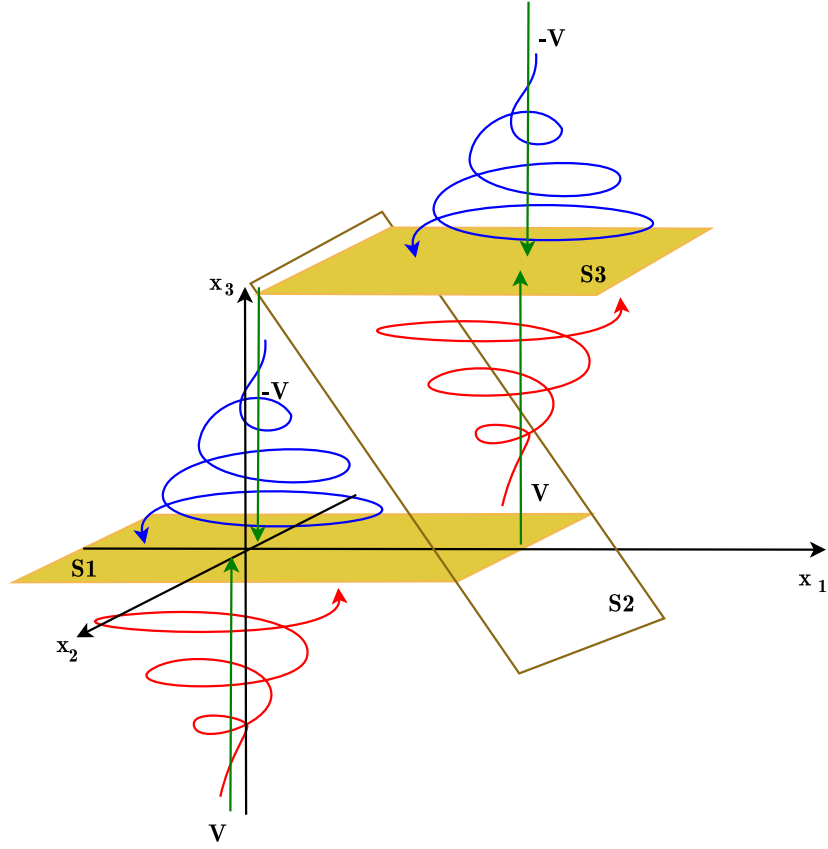}
	\caption{\label{fig:2spirals}A mechanism to generate a double-scroll chaotic attractors without equilibria.}
\end{figure}
\begin{figure}[h!]
	\centering
	\includegraphics[width=0.7\columnwidth]{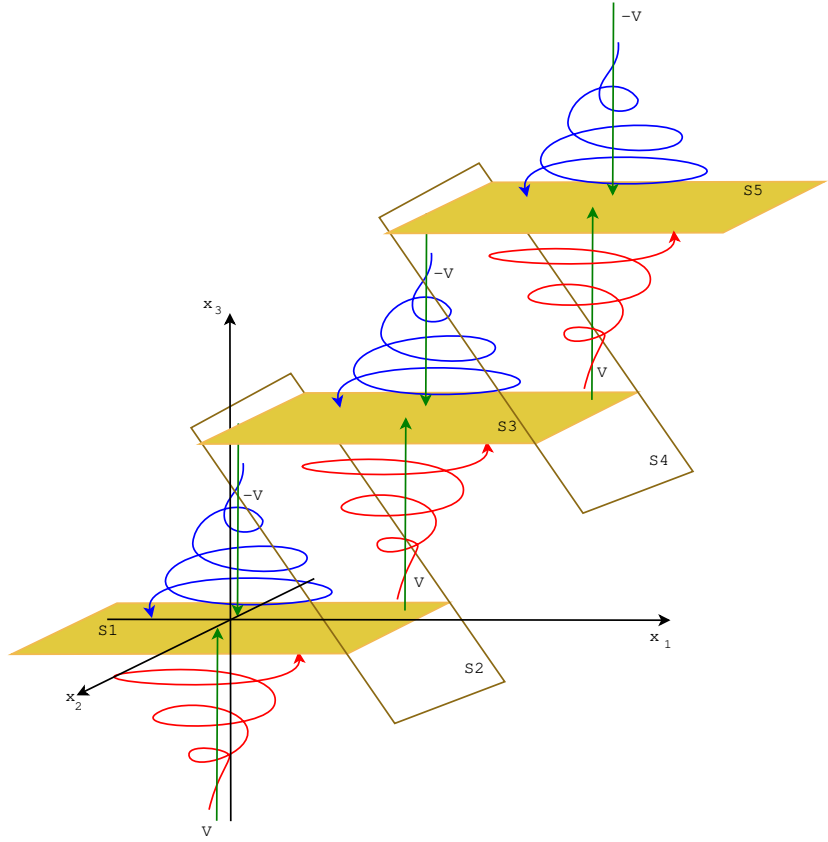}
	\caption{\label{fig:3spirals}A mechanism to generate a triple-scroll  chaotic attractors without equilibria.}
\end{figure}

\subsection{Numerical simulations}

{\bf Example 1.} Figure~\ref{fig:3Dattractor} shows a double scroll attractor which was obtained by using 4th order Runge Kutta (0.01 integration step)    and
considering the following matrix $A$ and vector $V$.
\begin{equation}\label{eq:Amatrixex1}
A=
\begin{bmatrix}
0.5& -10 & 0\\
10& 0.5 & 0\\
0& 0 & 0
\end{bmatrix},
V=\begin{bmatrix}
0\\
0\\
5
\end{bmatrix}.
\end{equation}
with PWL system (details can be seen in Appendix I):
\begin{equation}\label{eq:2SPWLS}
\boldsymbol{\dot{x}}=F_i({\boldsymbol{x}}), i=1,\ldots , 12.
\end{equation}

The switching surface are given by the planes $S_1: x_3=0$, $S_2: x_1+x_3/2=1$ and $S_3: x_3=2$ .
The vectors $W_i$, with $i=1,\ldots,4$ are given in Table~\ref{tab:example1} (see  Appendix I)\\


The resulting attractor with a double scroll is shown in the Figure~\ref{fig:3Dattractor}~(a) and its projections on the planes $(x_1,x_2)$, $(x_1,x_3)$ and $(x_2,x_3)$ in the Figures~\ref{fig:3Dattractor}~(b), \ref{fig:3Dattractor}~(c) and \ref{fig:3Dattractor}~(d), respectively. Its largest Lyapunov exponent calculated is $\lambda=0.97$ (Figure~\ref{fig:3Dattractor}~(e)).\\

\begin{figure}[h!]
	\centering
	(a)
	\includegraphics[width=0.4\columnwidth]{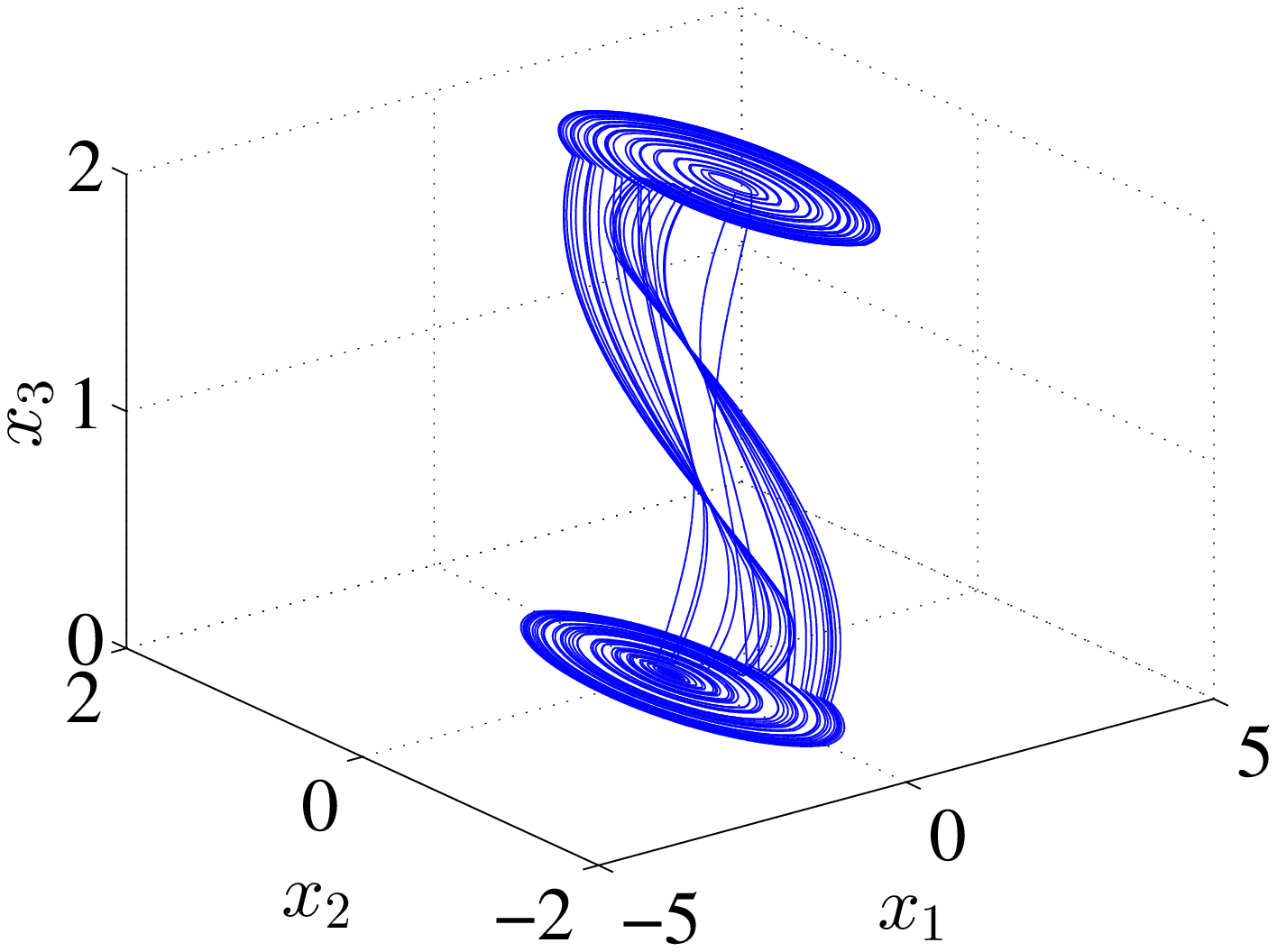}
	(b)
	\includegraphics[width=0.4\columnwidth]{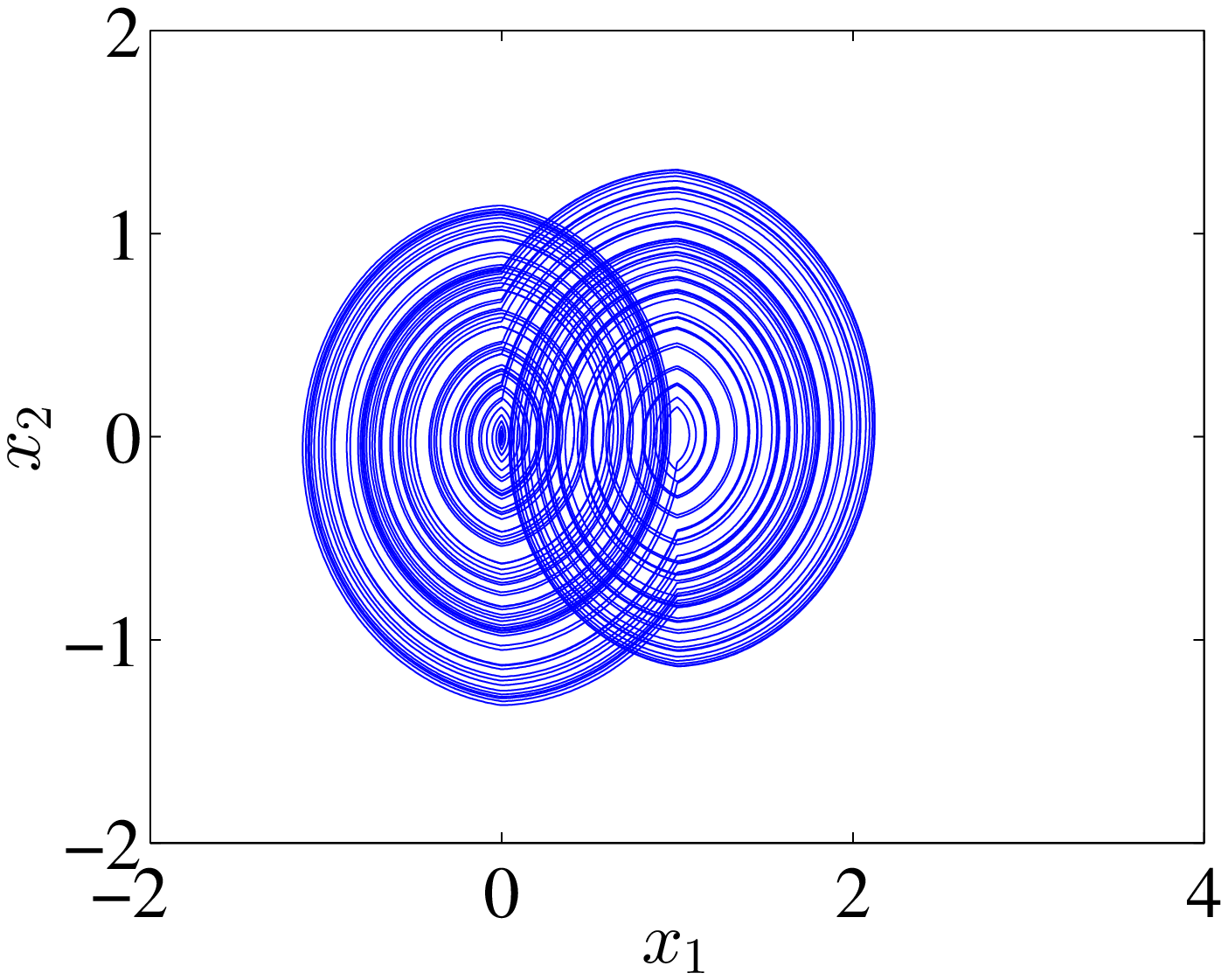}
	\\
	(c)
	\includegraphics[width=0.4\columnwidth]{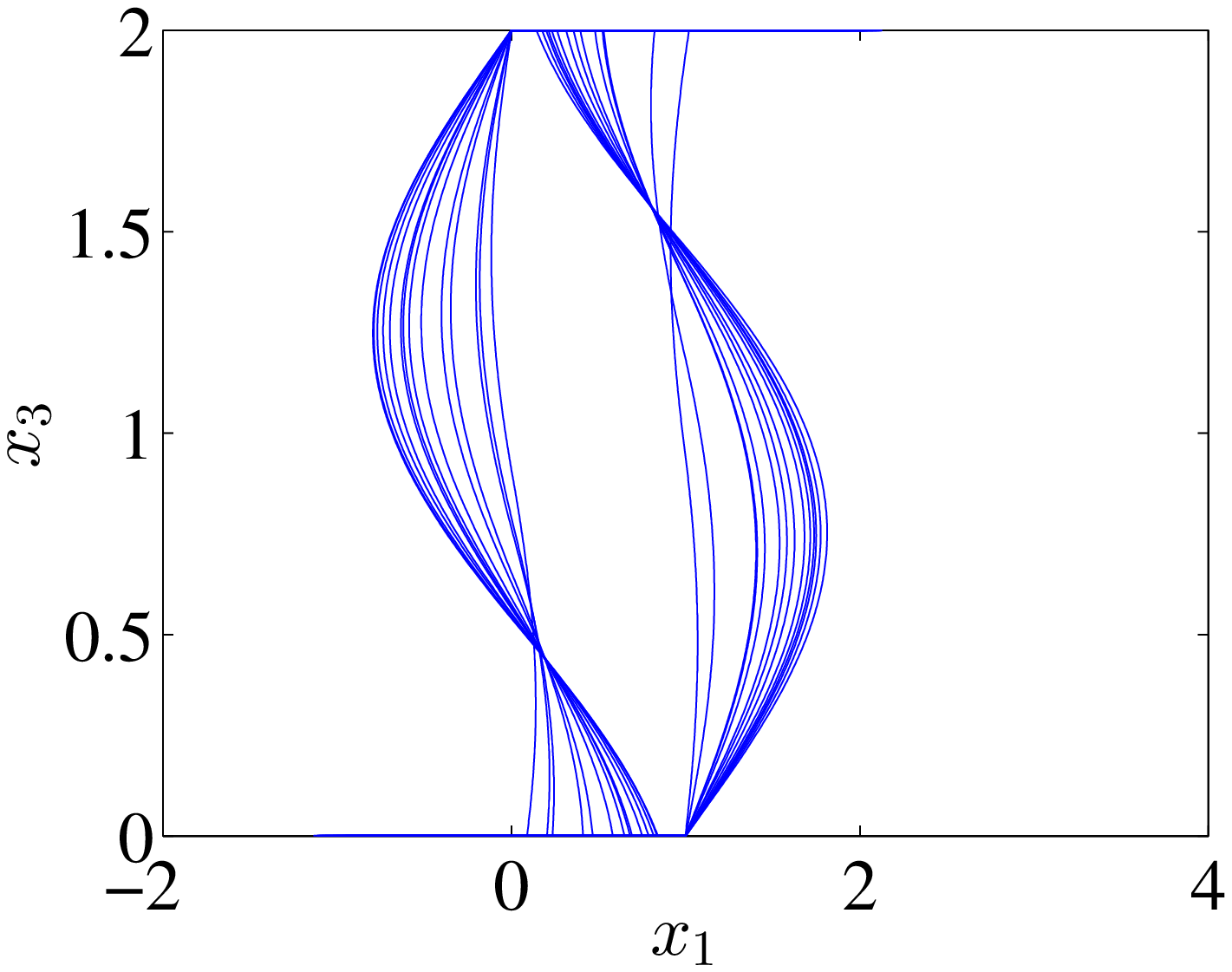}
	(d)
	\includegraphics[width=0.4\columnwidth]{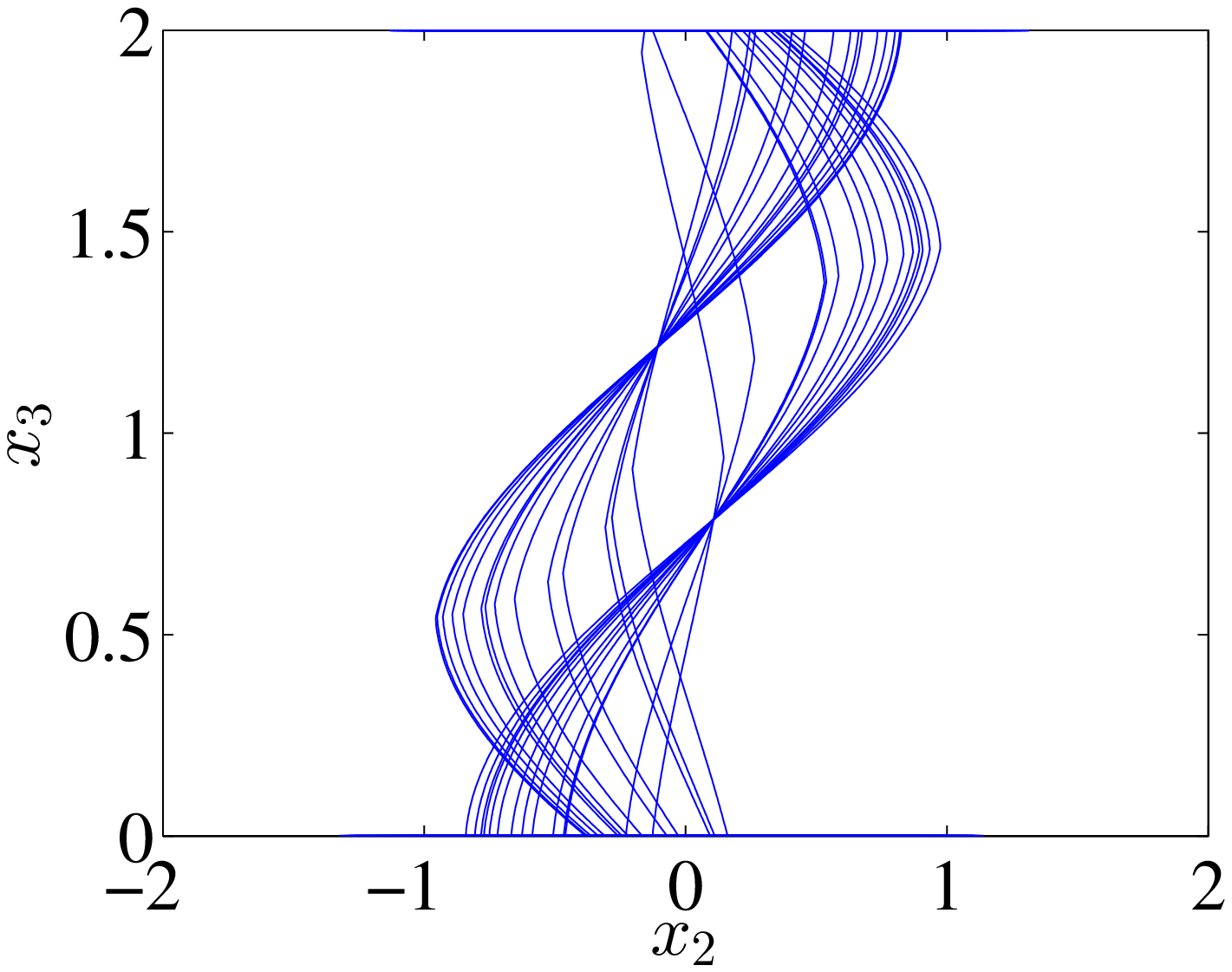}\\
	(e)
	\includegraphics[width=0.4\columnwidth]{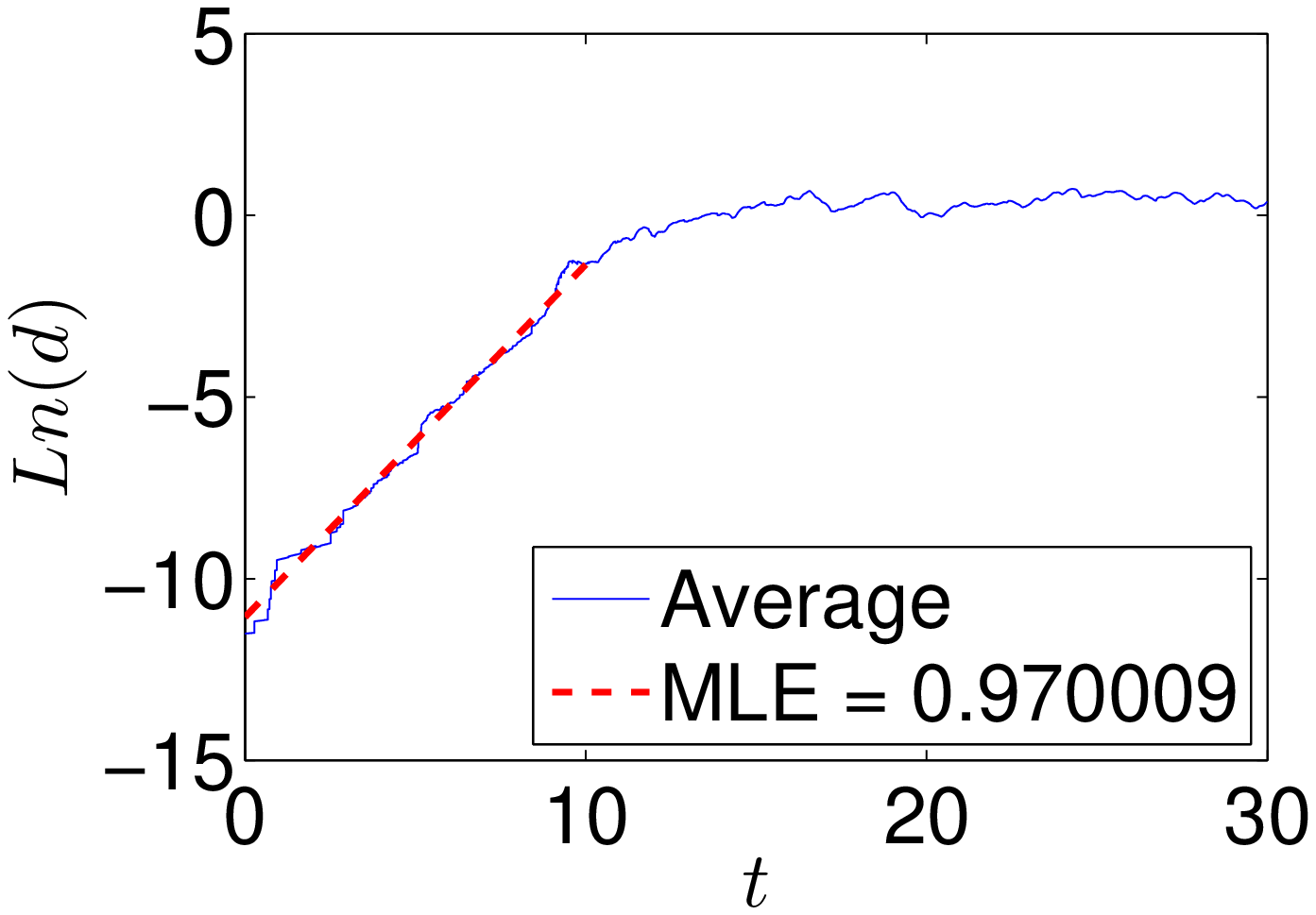}
	\caption{\label{fig:3Dattractor}Attractor of the system (\ref{eq:2SPWLS}) with $A$ and $V$ given in (\ref{eq:Amatrixex1}) for the initial condition $(0,0,0)$ in (a) the space $(x_1,x_2,x_3)$ and its projections onto the planes: (b) $(x_1,x_2)$, (c) $(x_1,x_3)$ and (d) $(x_2,x_3)$. In (e) the largest Lyapunov exponent.}
\end{figure}

It can be extended to a triple scroll attractor by considering the system given by \eqref{eq:6PWLS} with the aditional surfaces $S_4: x_1+x_3/2=3$ and $S_5: x_3=4$ and $W_i$, with $i=1,\ldots,6$ given in Table~\ref{tab:example1}  (see  Appendix I).\\

The resulting attractor with a triple scroll is shown in the Figure~\ref{fig:3Dattractor2}~(a) and its projections on the planes $(x_1,x_2)$, $(x_1,x_3)$ and $(x_2,x_3)$ in the Figures~\ref{fig:3Dattractor2}~(b), \ref{fig:3Dattractor2}~(c) and \ref{fig:3Dattractor2}~(d), respectively. Its largest Lyapunov exponent calculated is $\lambda=1.056$ (Figure~\ref{fig:3Dattractor2}~(e)).\\
\begin{figure}[h!]
	\centering
	(a)
	\includegraphics[width=0.4\columnwidth]{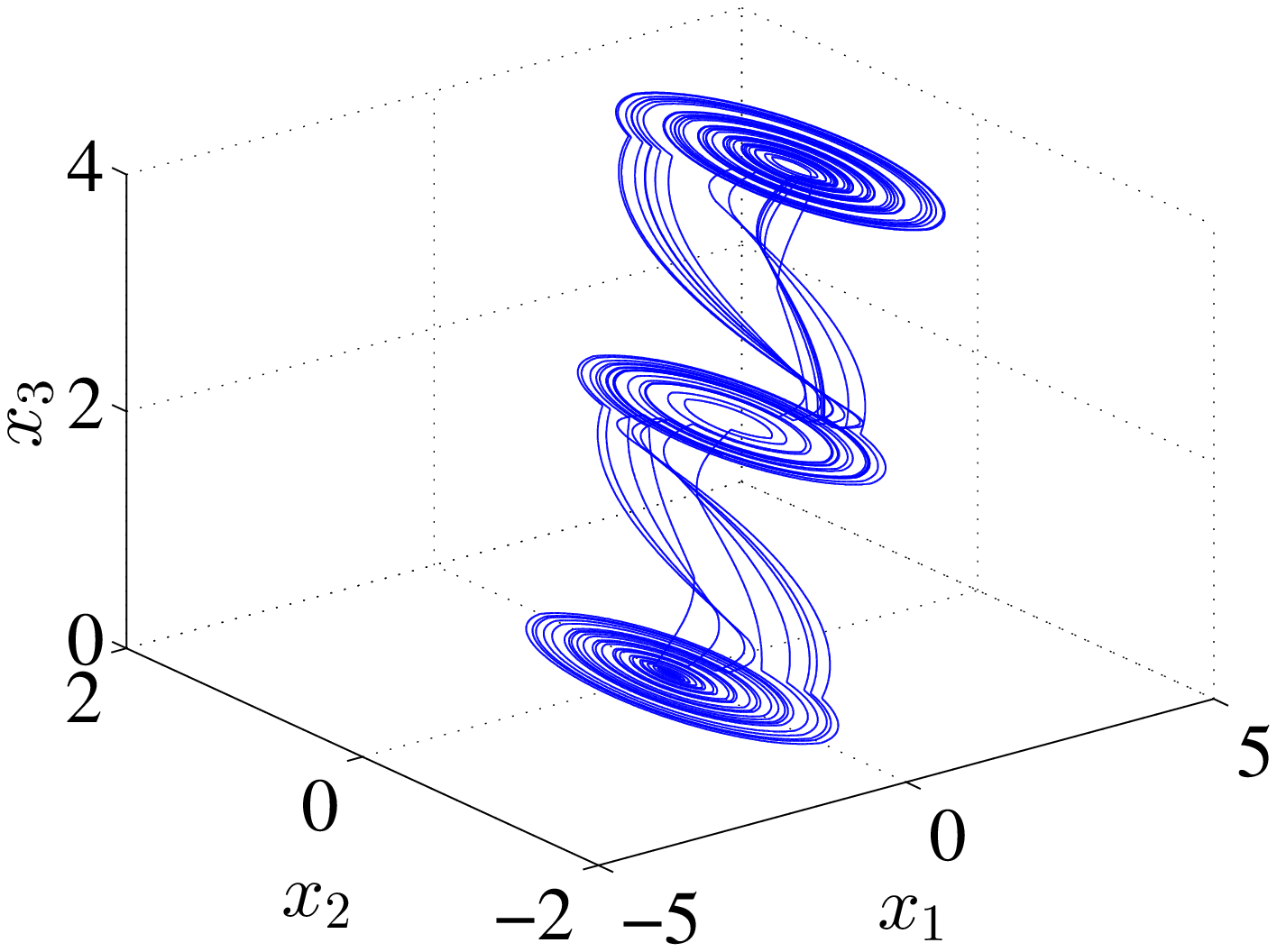}
	(b)
	\includegraphics[width=0.4\columnwidth]{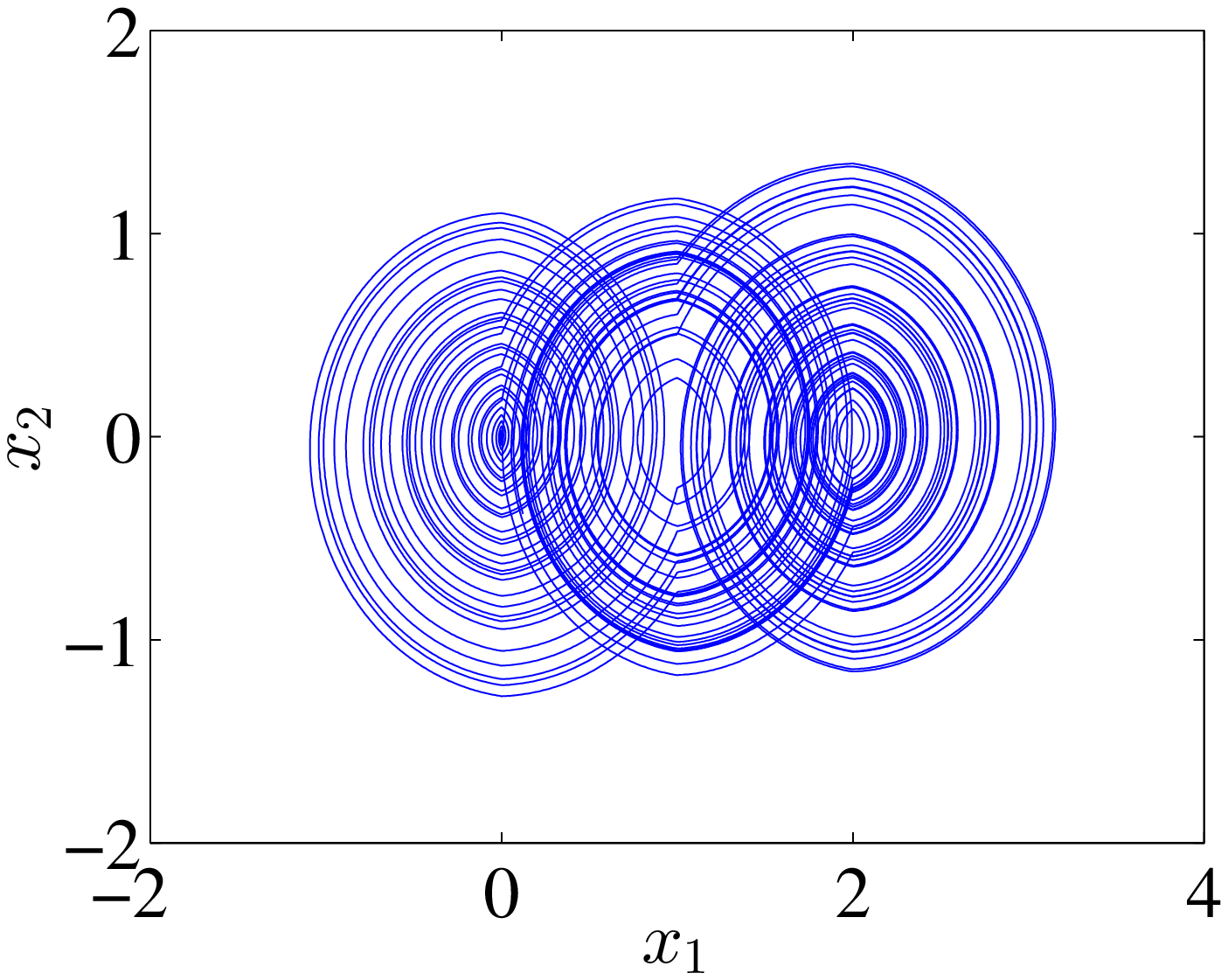}
	\\
	(c)
	\includegraphics[width=0.4\columnwidth]{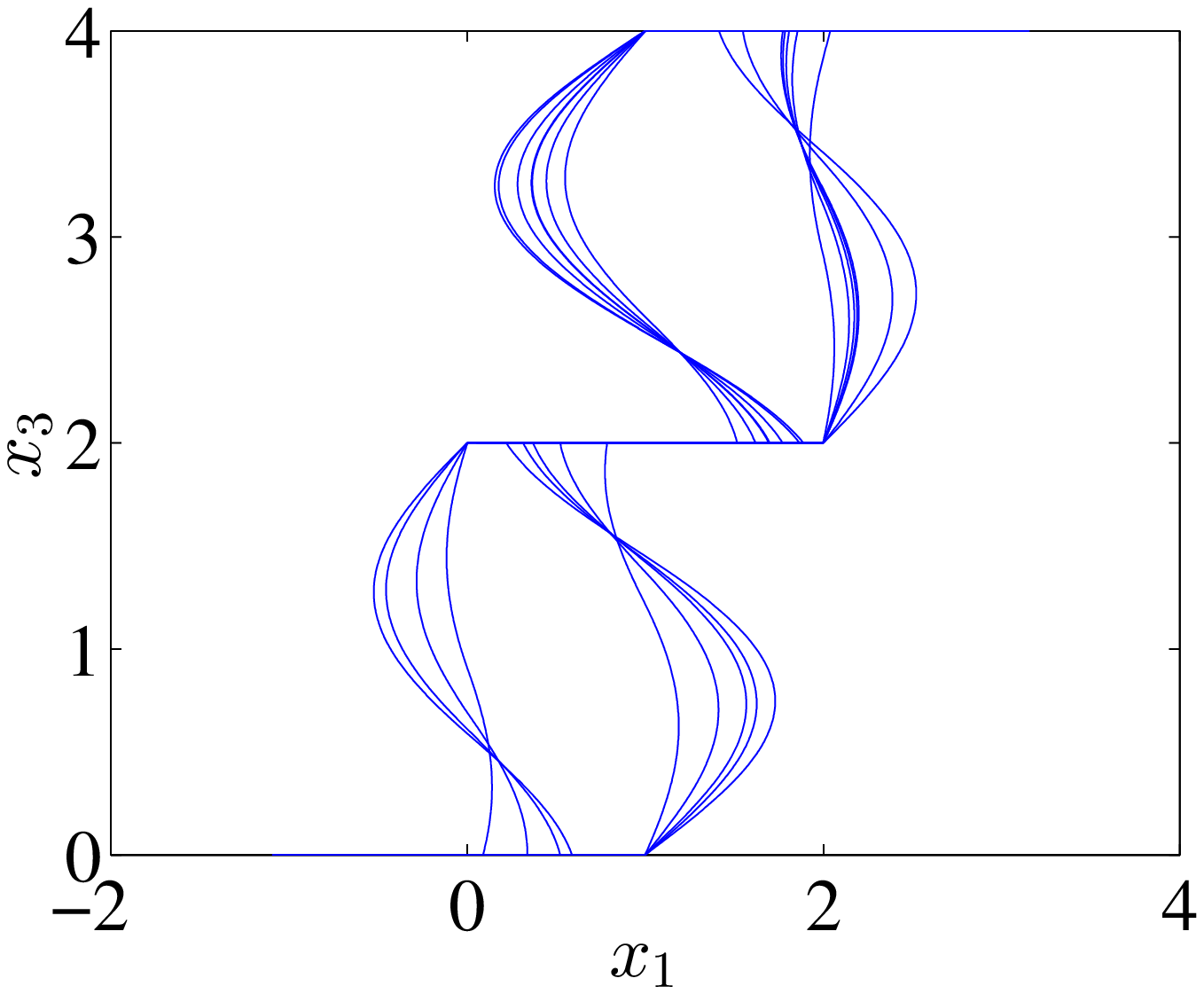}
	(d)
	\includegraphics[width=0.4\columnwidth]{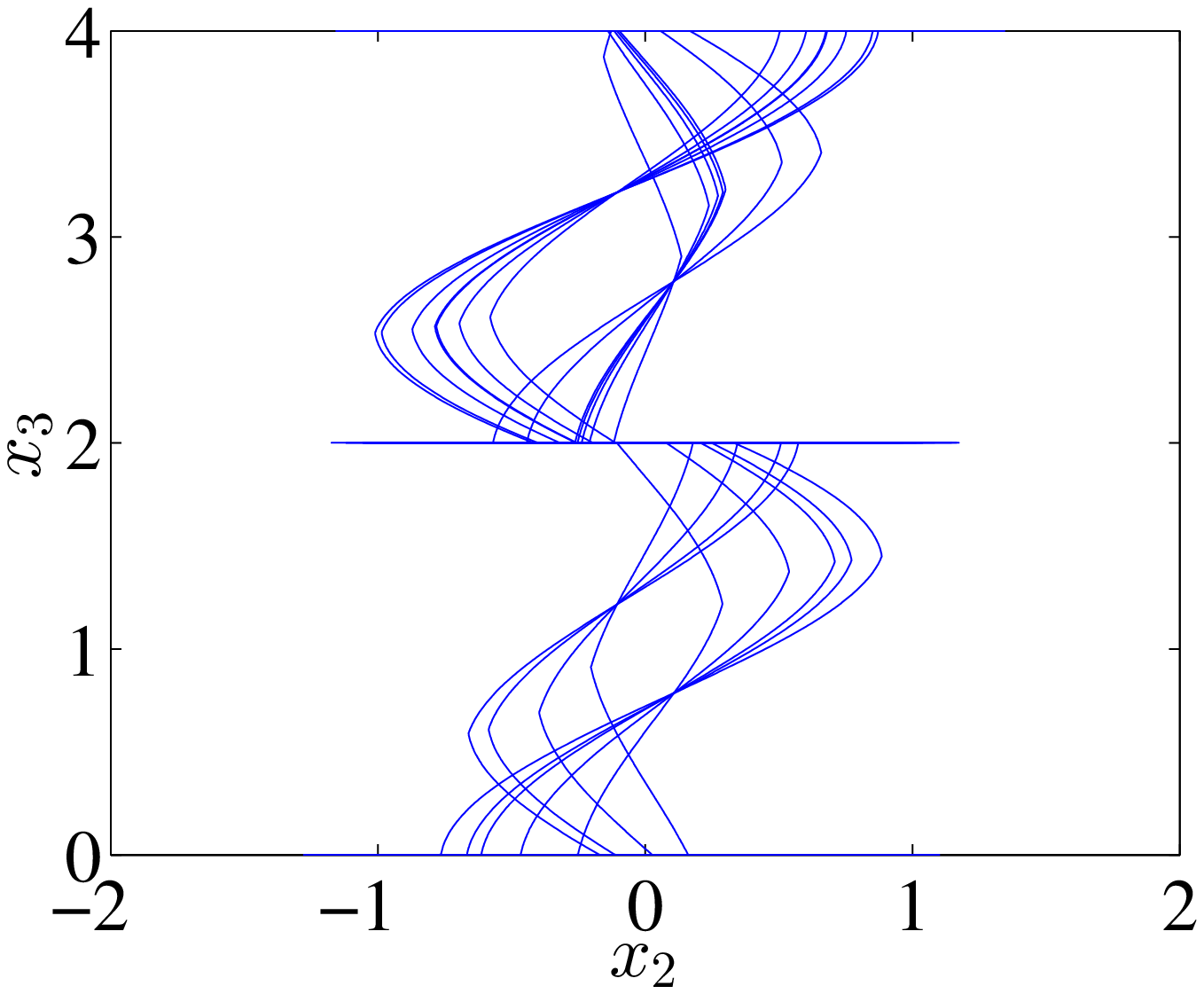}\\
	(e)
	\includegraphics[width=0.4\columnwidth]{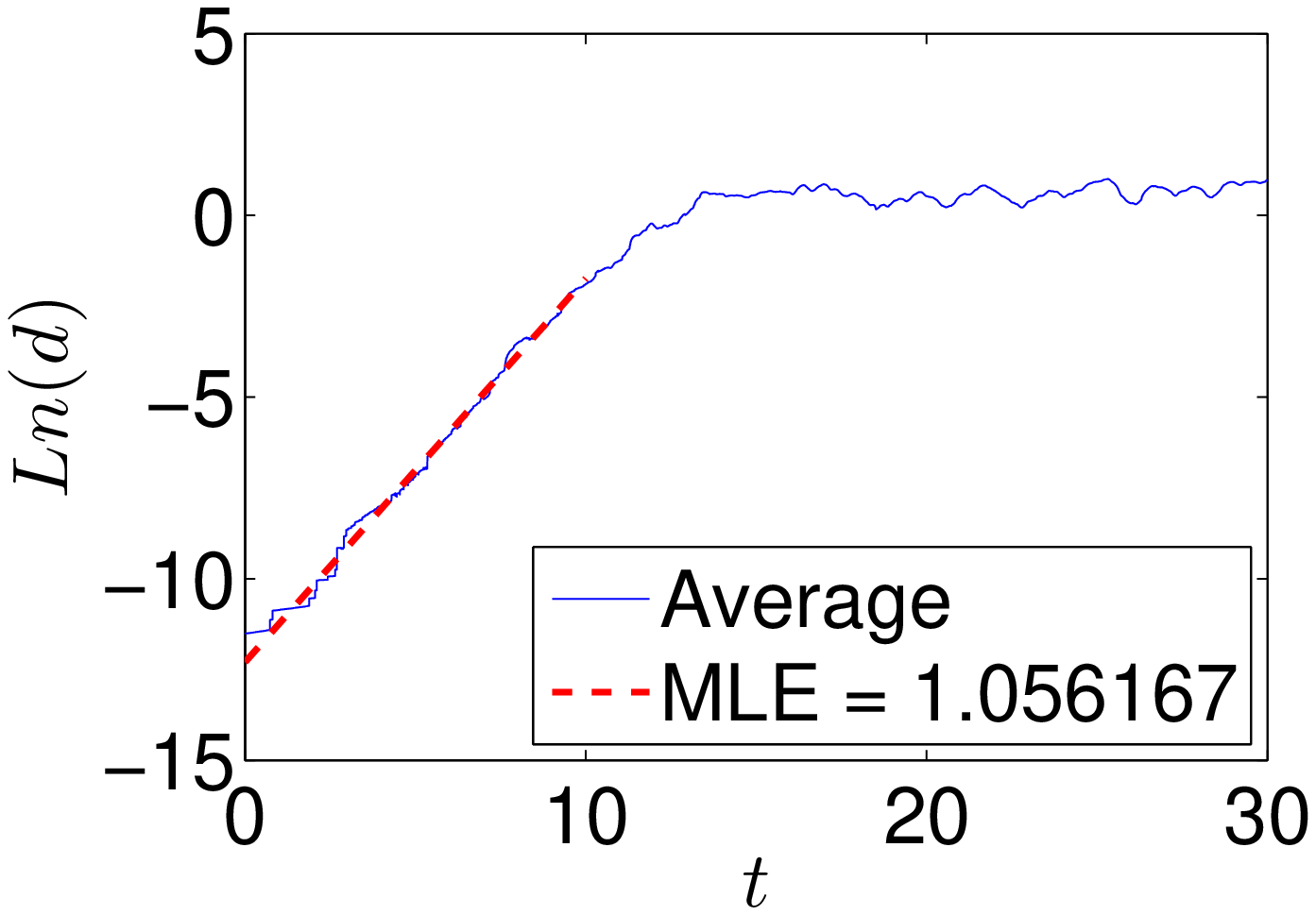}
	\caption{\label{fig:3Dattractor2}Triple scroll attractor for the initial condition $(0,0,0)$ in (a) the space $(x_1,x_2,x_3)$ and its projections onto the planes: (b) $(x_1,x_2)$, (c) $(x_1,x_3)$ and (d) $(x_2,x_3)$. In (e) the largest Lyapunov exponent.}
\end{figure}

Assigning the numbers 1, 3 and 5 to the regions where $\boldsymbol{x}<S_2$, $S_2\leq \boldsymbol{x}<S_4$, and $\boldsymbol{x}\geq S_4$, respectively, the symbol sequences 135, 131, 535 and 531 may be produced. Table~\ref{tab:example1seq} shows three sequences for different close initial conditions.\\

\begin{table}[h!]
	\centering
		\begin{tabular}{ll}
			$x(0)$&Sequence\\\hline
			(0.0,0.0,0.0)& 13135313135353531313535\\
			(0.0,0.1,0.0)& 131313535313135313135\\
			(-0.1,0,-0.1)& 13135353135313531353531\\\hline
		\end{tabular}
		\caption{\label{tab:example1seq}Sequences produced by the system from example 1 starting from three close to zero initial conditions for a 50s simulation with a RK4 of step=0.01.}
\end{table}

Later we show that this system passes the 0-1 Test for Chaos of Ref.~\cite{Gottwald}, justifying our description of chaotic motion.
\section{\label{sec:multiscrollconst2}Small perturbations of the zero eigenvalue: PWL dynamical system  $\dot{x}=Ax+B$ with multiscroll attractor and invertible $A$.}

Now we consider perturbing the real eigenvalue.   Consider the matrix $A_{\eta}$, where
\begin{equation}\label{eq:Amatrix2}
A=\begin{bmatrix}
m  & -n & 0\\
n 	 & m & 0\\
0 & 0  & \eta
\end{bmatrix},\hspace{.3cm}
A=[a_1,a_2,a_3],
\end{equation}
where $a_1$, $a_2$ and $a_3$ are the column vectors of the matrix $A$ and we suppose $m>0$ and $n\neq0$. The eigenvectors $V$ associated to the eigenvalue $\lambda=\eta$ are given as follows:

\begin{equation}
V=(0,0,v)^T,
\end{equation}
with $v\neq0$.\\

The column space of $A$  equals the two-dimensional unstable subspace $<~a_1,a_2~>$.  As before we consider
the vector field formed by  adding a vector $k_1 a_1+k_2 a_2$ in the span of the  column vectors. 

\begin{equation}
\boldsymbol{\dot{x}}=A\boldsymbol{x}+k_1a_1+k_2a_2.
\end{equation}
Using the matrix $A$ given by \eqref{eq:Amatrix2}, we have the following linear system:
\begin{equation}\label{ec:Aks2}
\boldsymbol{\dot{x}}=
\begin{bmatrix}
m& -n & 0\\
n& m & 0\\
0& 0 & \eta
\end{bmatrix}
\begin{bmatrix}
x_1+k_1\\x_2+k_2\\x_3
\end{bmatrix}.
\end{equation}
As before the  solution of the initial value problem  is given by
\begin{equation}\label{ec:solAx2}
\boldsymbol{x(t)}=
\begin{bmatrix}
e^{mt}cos(nt)& -e^{mt}sin(nt) & 0\\
e^{mt}sin(nt)& e^{mt}cos(nt) & 0\\
0& 0 & e^{\eta t}
\end{bmatrix}
\begin{bmatrix}
x_1(0)+k_1\\x_2(0)+k_2\\x_3(0)
\end{bmatrix}+
\begin{bmatrix}
-k_1\\-k_2\\\frac{v}{\eta}(e^{\eta t}-1)
\end{bmatrix}.\end{equation}

Consider the equation $\dot{x}=\eta x+w$ which has solution $x(t)=e^{\eta t } [x (0) + \frac{w}{\eta}] -\frac{w}{\eta}$. 
Suppose that the sign of $v$ is such  that the flow is directed toward $S_1$ in Figure 1. So $\dot{x}_3=\eta x_3-|v|$ if
$x_3(0)>0$ and $\dot{x}_3=\eta x_3+|v|$ if $x_3 (0)<0$.
If $\eta <0$ and $x_3 (0)>0$ then $\lim_{t \to \infty} x_3 (t)=\frac{|v|}{\eta}$, so that the flow in Figure 1 still intersects
$S_1$ from an initial condition $x_3 (0)>0$. Similarly  if  $x_3 (0)<0$ then the flow intersects $S_1$. Thus the topological structure of the flow is unchanged for small $|\eta|$, $\eta <0$. 

However if $\eta >0$ then $\lim_{t\to \infty} x_3(t)\to \infty$ if $x_3(0)>\frac{|v|}{\eta}$ and $\lim_{t\to \infty} x_3(t)\to - \infty$ for $x_3(0)<-\frac{|v|}{\eta}$. So in this case there is a neighborhood of $S_1$ of
points closer than $|\frac{|v|}{\eta}|$ to $S_1$ consisting of points attracted to $S_1$. Otherwise points are repelled from $S_1$.

Consider now the solution  where the sign of $v$ is such  that the flow is directed away from  $S_1$ in Figure 1. So $\dot{x}_3=\eta x_3+|v|$ if
$x_3(0)>0$ and $\dot{x}_3=\eta x_3-|v|$ if $x_3 (0)<0$.

Clearly if $\eta >0$ and $x_3 (0)>0$ then $\lim_{t \to \infty} x_3 (t)=\infty$ and if $\eta >0$ and $x_3 (0)<0$ , then $\lim_{t \to \infty} x_3 (t)=-\infty$. However if $\eta<0$ and $x_3 (0)>0$ then 
$\lim_{t \to \infty} x_3 (t)=\frac{-|v|}{\eta}$ and if $\eta<0$ and $x_3 (0)<0$ then $\lim_{t \to \infty} x_3 (t)=\frac{|v|}{\eta}$. 
Thus for small $|\eta|$ the multi-scroll attractors persist. 

\begin{figure}[h!]
	\centering
	(a)
	\includegraphics[width=0.4\columnwidth]{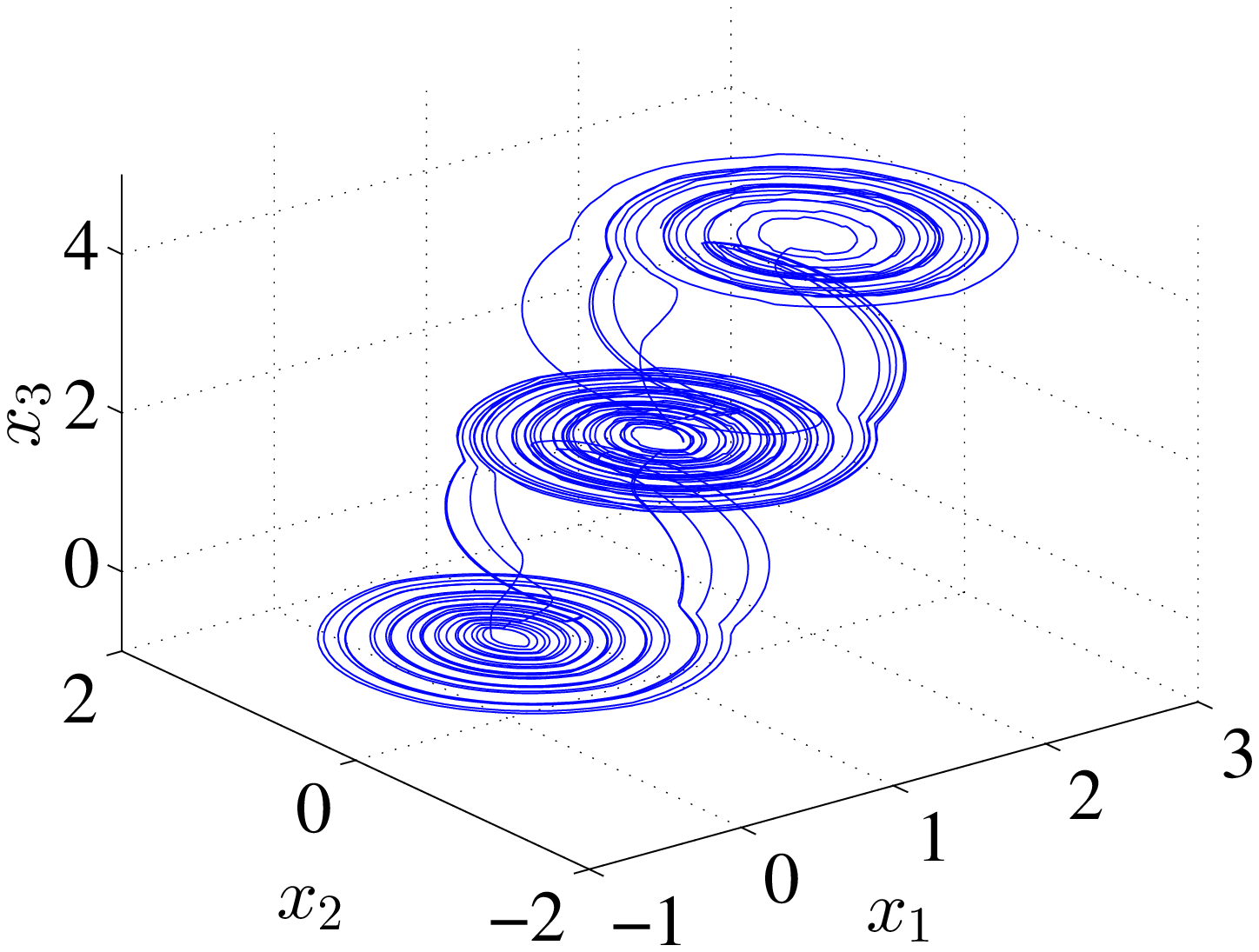}
	(b)
	\includegraphics[width=0.4\columnwidth]{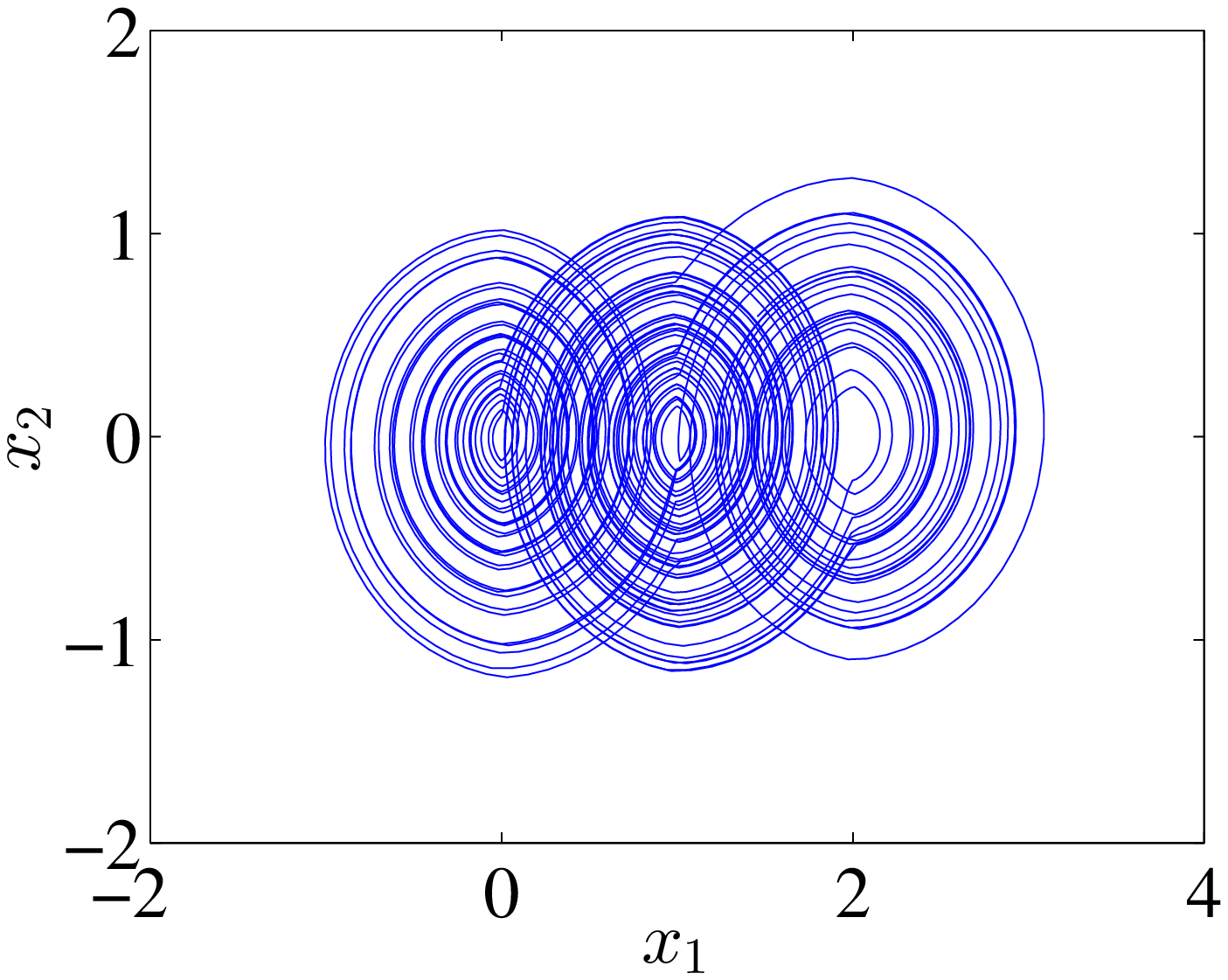}
	\\
	(c)
	\includegraphics[width=0.4\columnwidth]{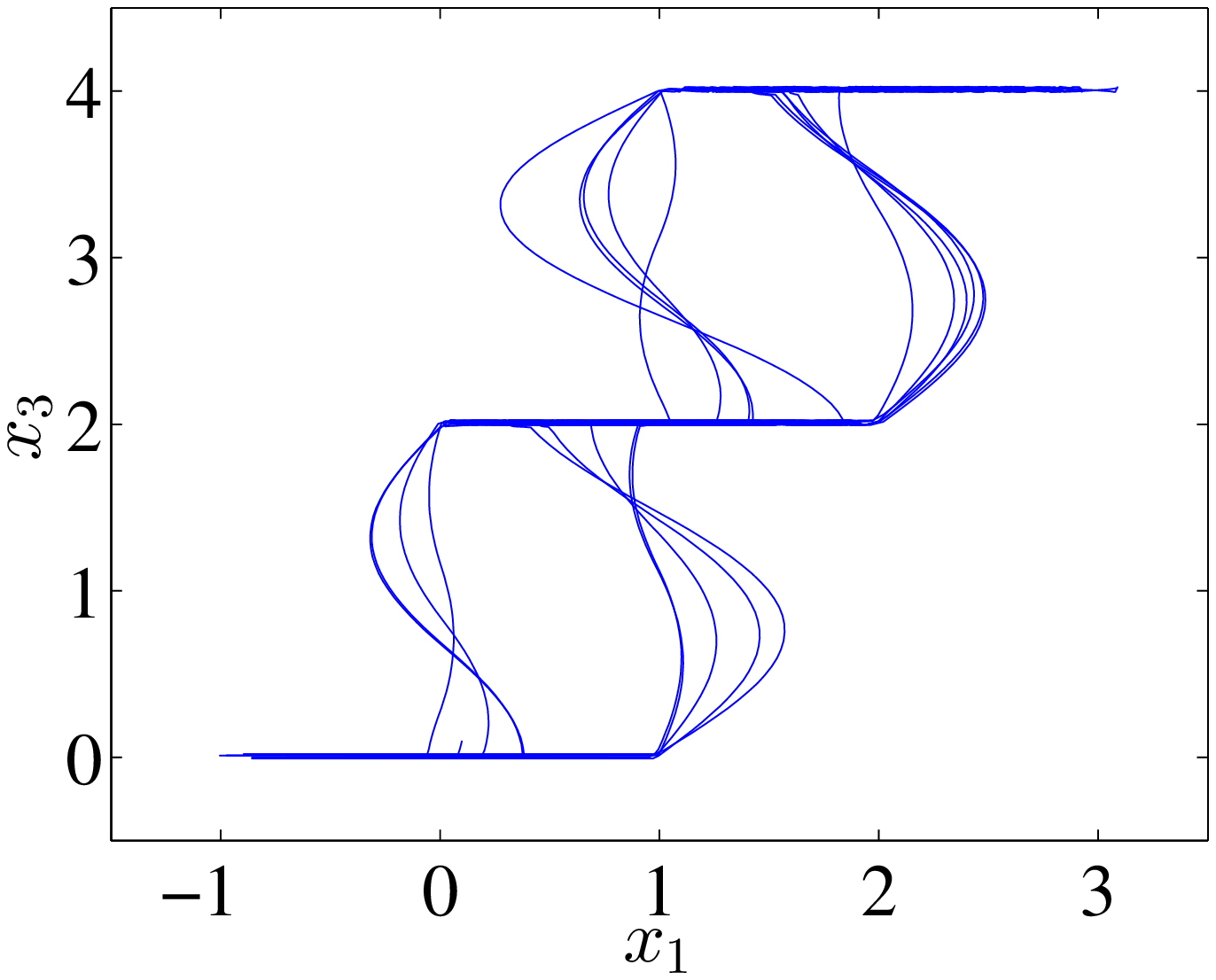}
	(d)
	\includegraphics[width=0.4\columnwidth]{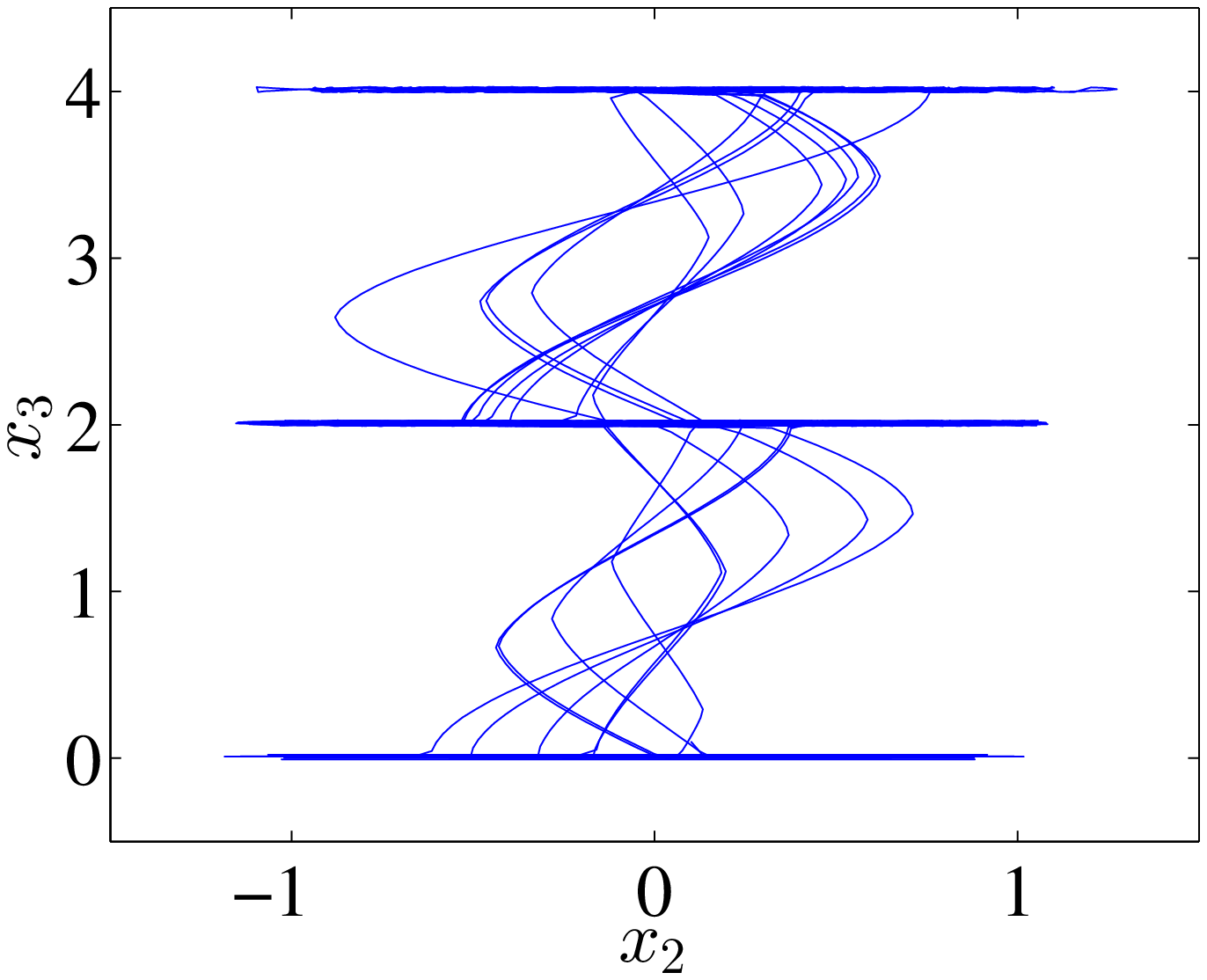}\\(e)
	\includegraphics[width=0.4\columnwidth]{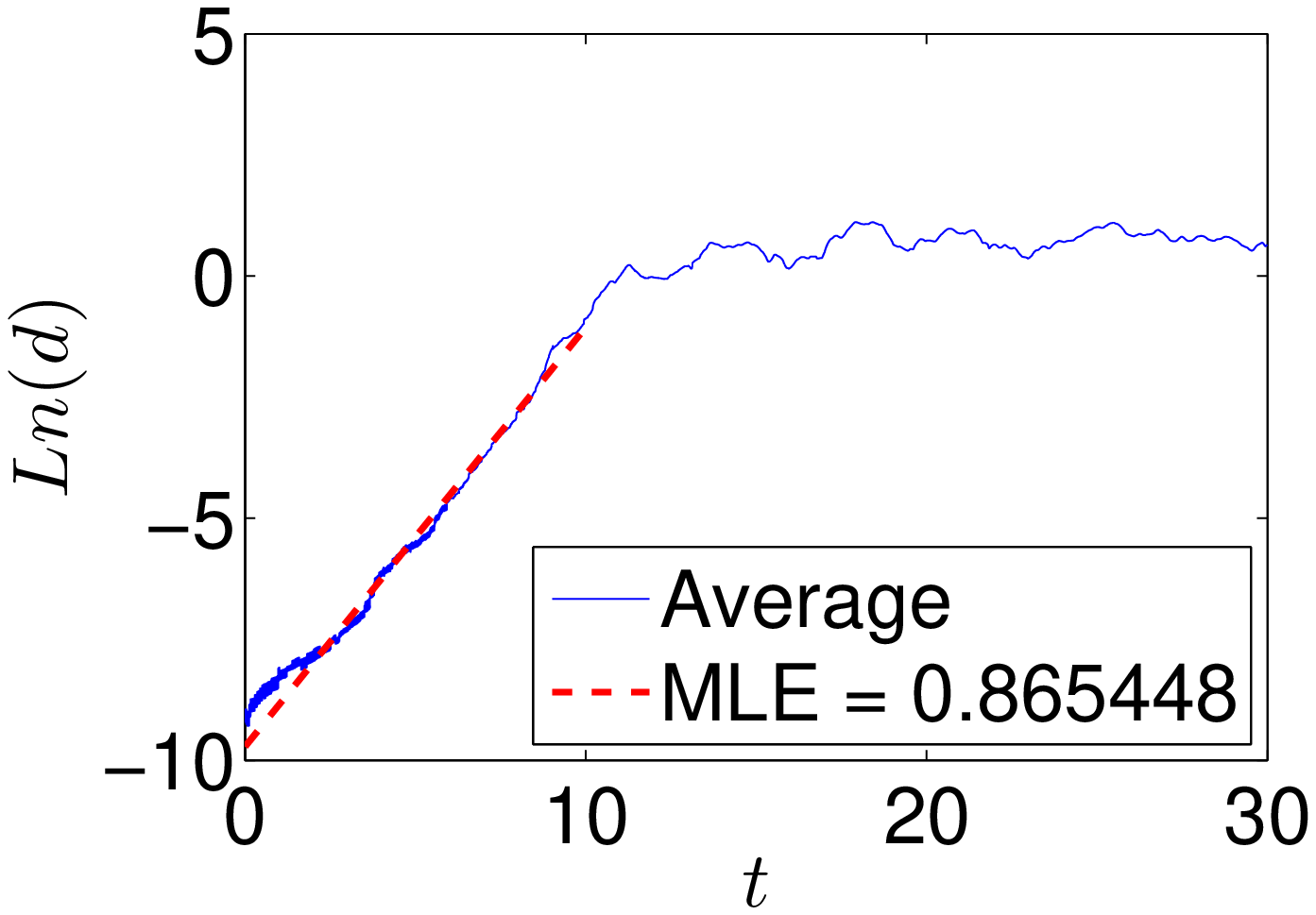}
	\caption{Attractor of the system (\ref{eq:invertible3scroll}) with $A$ given in (\ref{eq:Amatrixex2}) for the initial condition $(0.1,0.1,0.1)$ in (a) the space $(x_1,x_2,x_3)$ and its projections onto the planes: (b) $(x_1,x_2)$, (c) $(x_1,x_3)$ and (d) $(x_2,x_3)$. In (e) the largest Lyapunov exponent.}\label{fig_invertible3scroll}
\end{figure}

\subsection{Numerical simulations}
{\bfseries Example 2. }As an example of this construction with an invertible matrix, consider the system described by:

\begin{equation}\label{eq:invertible3scroll}
\boldsymbol{\dot{x}}= F_i(\boldsymbol{x}), i=1,\ldots,18.
\end{equation}
Detailes of the function \eqref{eq:invertible3scroll} are in Appendix II. The linear operator and  the the vector $V$ are given as follows
\begin{equation}\label{eq:Amatrixex2}
A=
\begin{bmatrix}
0.5& -10 & 0\\
10& 0.5 & 0\\
0& 0 & 0.1
\end{bmatrix},
V=\begin{bmatrix}
0\\
0\\
5
\end{bmatrix}.
\end{equation}
\\
The switching surfaces are given by the planes $S_1: x_3=0, \ S_2:x_1+x_3/2=1, \ S_3:x_3=2, \ S_4:x_1+x_3/2=3, \ S_5:x_3=4$. The vectors $W_i$, with $i=1,\ldots,6$ are given in Table~\ref{tab:example1} (See Appendix I).

The resulting attractor obtained by using a 4th order Runge Kutta (0.01 integration step) is shown in Figure~\ref{fig_invertible3scroll}.\\
Assigning the numbers 1, 3 and 5 to the regions where $\boldsymbol{x}<S_2$, $S_2\leq \boldsymbol{x}<S_4$, and $\boldsymbol{x}\geq S_4$, respectively, the symbol sequences 135, 131, 535 and 531 may be produced. Table~\ref{tab:example2seq} shows three sequences generated from different close to zero initial conditions. \\

\begin{table}[h!]
	\centering
		\begin{tabular}{ll}
			$x(0)$&Sequence\\\hline
			(0.1,0.1,0.1)&123232121232121232323\\
			(0.0,0.1,0.0)&1212321232121212123\\
			(-0.1,0,-0.1)&123212121212321232321\\\hline
		\end{tabular}
		\caption{\label{tab:example2seq}Sequences produced by the system from example 2 starting from three close to zero initial conditions for a 50s simulation with a RK4 of step=0.01.}
\end{table}

%
As for the previous system, we  show in  the next section  that this system passes the 0-1 Test for Chaos of Ref.~\cite{Gottwald}, justifying our description of chaotic motion.
\section{\label{sec:dynamics}Dynamics of the proposed systems}

To test for chaotic  dynamics in the systems we have investigated we used  the test algorithm proposed in Ref.~\cite{Gottwald}. The input for the test is a one dimensional time series $\phi(n)$ which drives a 2-dimensional system $PQ(\phi(n),c)$ as described in Ref.~\cite{Gottwald}, namely

\[
p_c(n)=\sum_{j=1}^n\phi (j) \cos (jc)
\]
\[
q_c(n)=\sum_{j=1}^n\phi (j) \sin (jc)
\]
where $c\in (0,2\pi)$ is a real parameter. The rate of growth of  the variance of this system distinguishes between chaotic  ($K=1$) and regular motion  $(K=0)$ as determined by a derived quantity $K$.

For both systems with a triple scroll attractor previously introduced, 3-dimensional time series were generated by means of a RK4 integrator with a time step equal to $0.01$ which was then sampled T time $\tau=0.25$ to get 3-dimensional time series of length $N=2000$. The 1-dimensional time-series was given by $\phi \circ T^n (p_0)$ for an initial condition $p_0=(x_1^0, x_2^0, x_3^0)$
and $\phi (x_1,x_2,x_3)=x_3$. So that we were observing the $z$-component of a trajectory  under the time -T map of the flow.

The growth rates calculated were $K= 0.9930$ and $K=0.9962$ for the first and second example, respectively. As described in Ref.~\cite{Gottwald} $K$ is calculated as the median value of the asymptotic growth rates of a growth rate $K_c$ for different values of the parameter $c$ belonging to the 2-dimensional driven system. In figures~\ref{fig:kcsys}(a) and \ref{fig:kcsys}(b) the $K_c$ values are shown.
Thus our system, according to  the 0-1 test, is chaotic.

\begin{figure}[h!]
	\centering
	(a)
	\includegraphics[width=0.4\columnwidth]{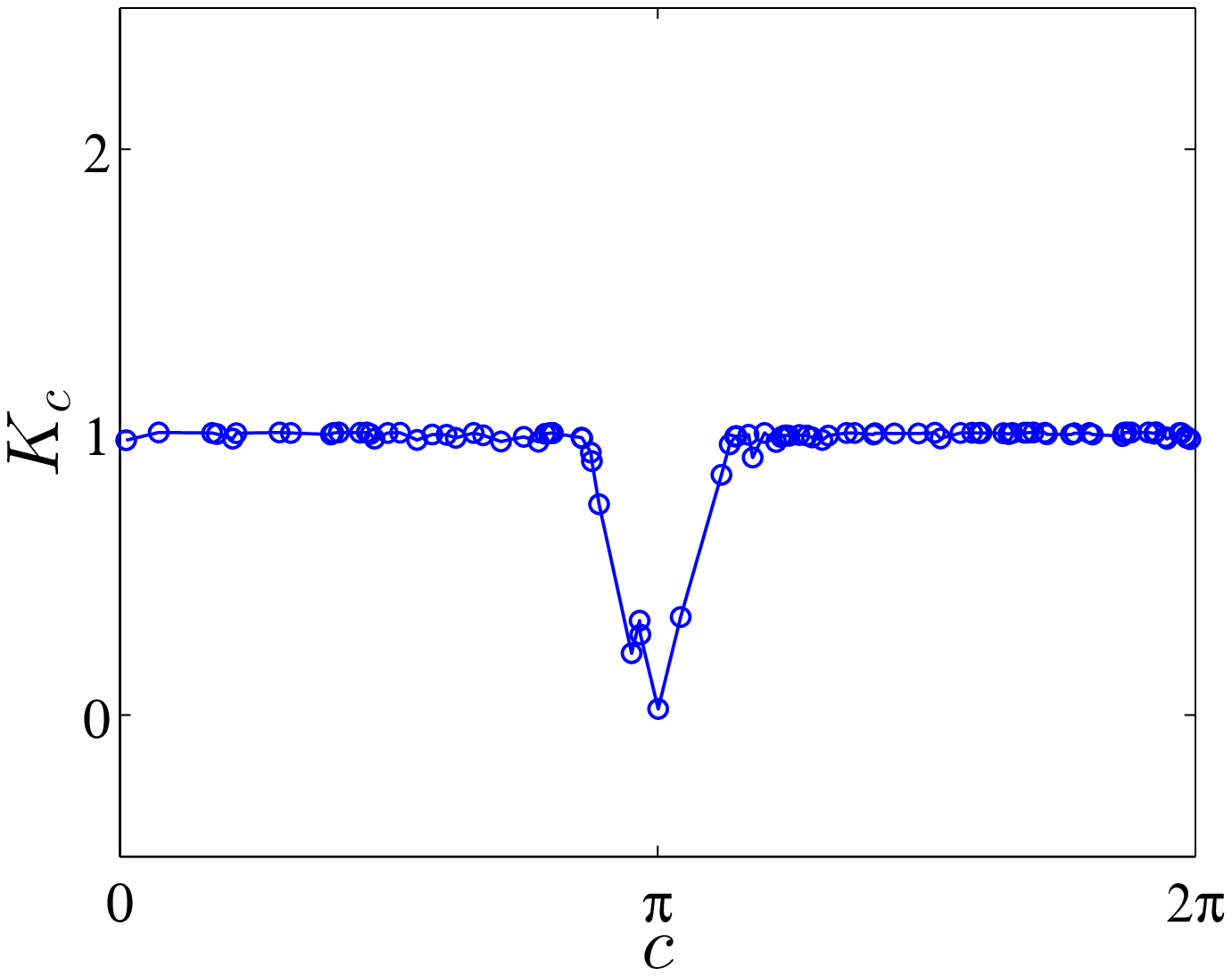}
	(b)
	\includegraphics[width=0.4\columnwidth]{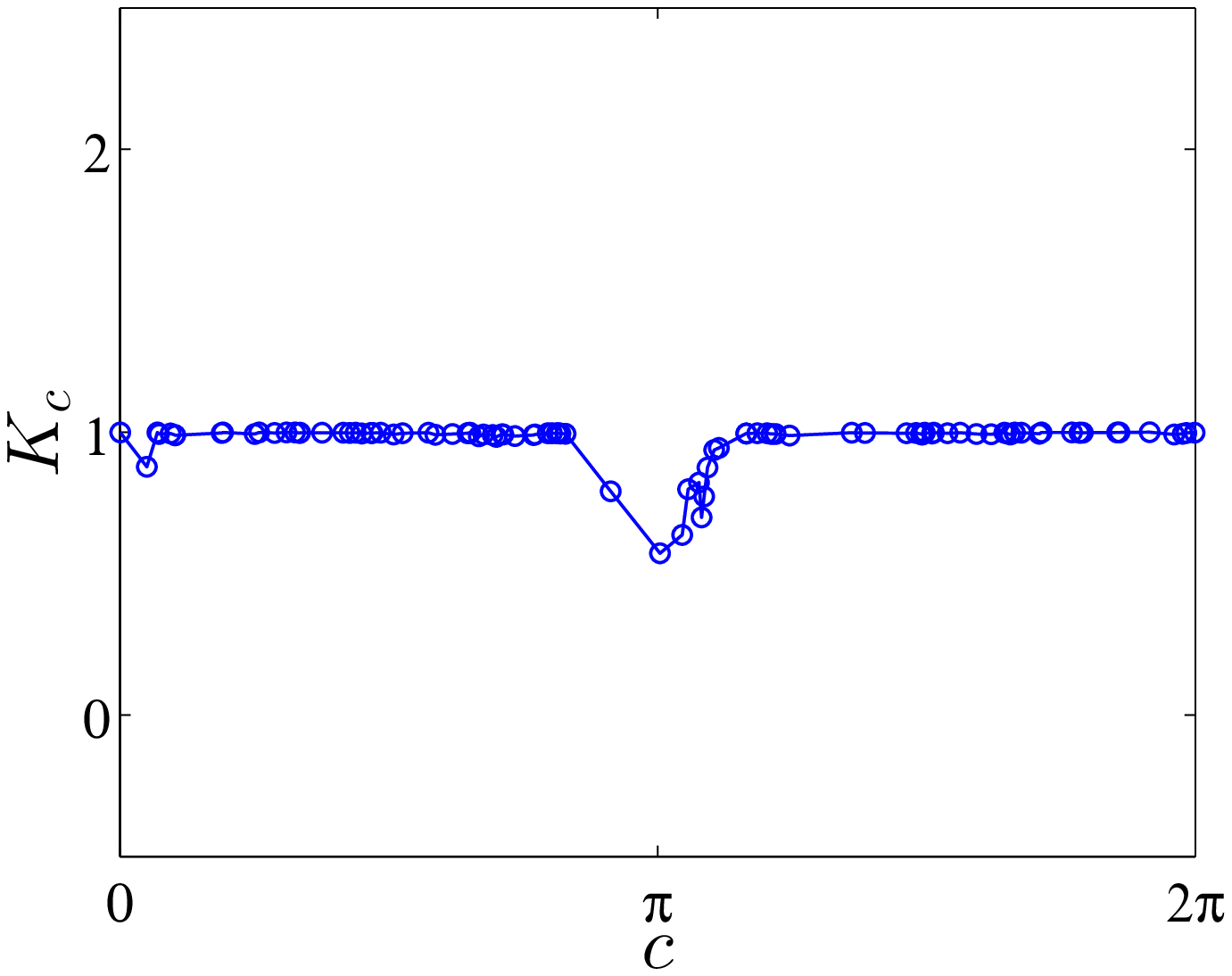}
	\caption{\label{fig:kcsys} Asymptotic growth rates $Kc$ calculated for the triple scroll attractor systems presented in sections (a)\ref{sec:multiscrollconst} and (b)\ref{sec:multiscrollconst2}.}
\end{figure}

\section{Conclusion}

In this paper new classes of piecewise linear dynamical systems (with no equilibria) which present multiscroll attractors were introduced. The systems are very simple geometrically and 
stable to perturbation. The attractors generated by these classes have a similar chaotic  behavior. Further investigation, via perhaps symbolic dynamics, is needed to quantify the disorder of the system.

\section*{Acknowledgment}
R.J. Escalante-Gonz\'alez PhD student of control and dynamical systems at IPICYT thanks CONACYT(Mexico) for the scholarship granted (Register number 337188). E. Campos-Cant\'on acknowledges CONACYT for the financial support for sabbatical year. M. Nicol  thanks the NSF for partial support on NSF-DMS Grant 1600780.

\section*{ Appendix I}

System described by \eqref{eq:2SPWLS}:
\begin{equation}\label{}
\boldsymbol{\dot{x}}=
\begin{cases}
A\boldsymbol{x}+V+W_1, & \text{if } \boldsymbol{x} <S_1, x_1< 0;\\
A\boldsymbol{x}+V+W_2, & \text{if } \boldsymbol{x} <S_1,\boldsymbol{x} <S_2, x_1 \ge 0;\\
A\boldsymbol{x}+W_1, &  \text{if } \boldsymbol{x}\in S_1, x_1< 0;\\
A\boldsymbol{x}+W_2, &  \text{if } \boldsymbol{x}\in S_1,\boldsymbol{x} <S_2, x_1 \ge 0;\\
A\boldsymbol{x}-V+W_1, & \text{if } \boldsymbol{x} > S_1,\boldsymbol{x} <S_2, x_1< 0 ;\\
A\boldsymbol{x}-V+W_2, & \text{if } \boldsymbol{x} > S_1,\boldsymbol{x} <S_2, x_1 \ge 0;\\
A\boldsymbol{x}+V+W_3, & \text{if } \boldsymbol{x} <S_3,\boldsymbol{x} \geq S_2, x_1< 1;\\
A\boldsymbol{x}+V+W_4, & \text{if } \boldsymbol{x} <S_3, \boldsymbol{x} \geq S_2, x_1 \ge 1;\\
A\boldsymbol{x}+W_3, & \text{if } \boldsymbol{x}\in S_3,\boldsymbol{x} \geq S_2, x_1< 1;\\
A\boldsymbol{x}+W_4, & \text{if } \boldsymbol{x}\in S_3, x_1 \ge 1;\\
A\boldsymbol{x}-V+W_3, & \text{if } \boldsymbol{x} > S_3,\boldsymbol{x} \geq S_2, x_1< 1 ;\\
A\boldsymbol{x}-V+W_4, & \text{if } \boldsymbol{x} > S_3, x_1 \ge 1
\end{cases}
\end{equation}

The switching surface are given by the planes $S_1: x_3=0$, $S_2: x_1+x_3/2=1$ and $S_3: x_3=2$ .
The vectors $W_i$, with $i=1,\ldots,4$ are given in Table~\ref{tab:example1}\\

\begin{table}[h!]
	\centering
		\begin{tabular}{ccc}
			$W_i$&  $k_1$ & $k_2$\\
			\hline
			$W_1$  &  -0.1 & 0\\
			$W_2$ &  0.1 & 0\\
			$W_3$ &  -1.1 & 0\\
			$W_4$ &  -0.9 & 0\\
			$W_5$ &  -2.1 & 0\\
			$W_6$ &  -1.9 & 0\\\hline
		\end{tabular}
		\caption{\label{tab:example1}Vectors $W_i$ for $i=1,\ldots,6$.}
\end{table}

\section*{Appendix II}
System described by \eqref{eq:invertible3scroll}:

\begin{equation}
\boldsymbol{\dot{x}}=
\begin{cases}
A\boldsymbol{x}+V+W_1, & \text{if } \boldsymbol{x} <S_1, x_1< 0;\\
A\boldsymbol{x}+V+W_2, & \text{if } \boldsymbol{x} <S_1,\boldsymbol{x} <S_2, x_1 \ge 0;\\
A\boldsymbol{x}+W_1,   & \text{if } \boldsymbol{x}\in S_1, x_1< 0;\\
A\boldsymbol{x}+W_2,   & \text{if } \boldsymbol{x}\in S_1,\boldsymbol{x} <S_2, x_1 \ge 0;\\
A\boldsymbol{x}-V+W_1, & \text{if } \boldsymbol{x} > S_1,\boldsymbol{x} <S_2, x_1< 0 ;\\
A\boldsymbol{x}-V+W_2, & \text{if } \boldsymbol{x} > S_1,\boldsymbol{x} <S_2, x_1 \ge 0;\\
A\boldsymbol{x}+V+W_3, & \text{if } \boldsymbol{x} <S_3,\boldsymbol{x} \geq S_2, x_1< 1;\\
A\boldsymbol{x}+V+W_4, & \text{if } \boldsymbol{x} <S_3, \boldsymbol{x} \geq S_2, \boldsymbol{x} < S_4, x_1 \ge 1;\\
A\boldsymbol{x}+W_3,   & \text{if } \boldsymbol{x}\in S_3,\boldsymbol{x} \geq S_2,  x_1< 1;\\
A\boldsymbol{x}+W_4,   & \text{if } \boldsymbol{x}\in S_3,\boldsymbol{x} < S_4 ,x_1 \ge 1;\\ 
A\boldsymbol{x}-V+W_3, & \text{if } \boldsymbol{x} > S_3,\boldsymbol{x} \geq S_2, \boldsymbol{x} < S_4, x_1< 1 ;\\ 
A\boldsymbol{x}-V+W_4, & \text{if } \boldsymbol{x} > S_3, \boldsymbol{x} < S_4, x_1 \ge 1\\
A\boldsymbol{x}+V+W_5, & \text{if } \boldsymbol{x} <S_5,\boldsymbol{x} \geq S_4, x_1< 2;\\
A\boldsymbol{x}+V+W_6, & \text{if } \boldsymbol{x} <S_5, \boldsymbol{x} \geq S_4, x_1 \ge 2;\\
A\boldsymbol{x}+W_5,   & \text{if } \boldsymbol{x}\in S_5,\boldsymbol{x} \geq S_4,  x_1< 2;\\
A\boldsymbol{x}+W_6,   & \text{if } \boldsymbol{x}\in S_5,x_1 \ge 2;\\ 
A\boldsymbol{x}-V+W_5, & \text{if } \boldsymbol{x} > S_5,\boldsymbol{x} \geq S_4, x_1< 2 ;\\ 
A\boldsymbol{x}-V+W_6, & \text{if } \boldsymbol{x} > S_5, x_1 \ge 2\\

\end{cases}
\end{equation}

\end{document}